\newcommand{\field}[1]{\mathbb{#1}}
\newcommand{\NN}{\field{N}}
\newcommand{\TT}{\field{T}}
\newcommand{\ZZ}{\field{Z}}
\newcommand{\QQ}{\field{Q}}
\newcommand{\Cc}{\mathcal C}
\newcommand{\Kk}{\mathcal K}
\newcommand{\Ll}{\mathcal L}
\newcommand{\Oo}{\mathcal O}
\newcommand{\Pp}{\mathcal P}
\newcommand{\Qq}{\mathcal Q}
\newcommand{\Tt}{\mathcal T}
\newcommand\3[1]{{\mathds #1}}
\newcommand{\NO}{\operatorname{\mathcal{NO}}}
\newcommand{\NT}{\operatorname{\mathcal{NT}}}
\newcommand{\id}{\operatorname{id}}
\newcommand{\lsp}{\operatorname{span}}
\newcommand{\clsp}{\operatorname{\overline{span\!}\,\,}}
\newcommand{\Tc}{\Tt_{\rm cov}}
\theoremstyle{plain}
\newtheorem{theorem}{Theorem}[section]
\newtheorem*{theorem*}{Theorem}
\newtheorem*{prop*}{Proposition}
\newtheorem{cor}[theorem]{Corollary}
\newtheorem{lemma}[theorem]{Lemma}
\newtheorem{prop}[theorem]{Proposition}
\theoremstyle{remark}
\newtheorem{rmk}[theorem]{Remark}
\theoremstyle{definition}
\numberwithin{equation}{section}
\begin{document}

\title{The Cuntz algebra $\Qq_\NN$ and $C^*$-algebras of product systems}

\author{Jeong Hee Hong}\address{Department of Data Information, Korea Maritime University,
Busan 606--791, South Korea}
\email{hongjh@hhu.ac.kr}

\author{Nadia S. Larsen}
\address{Department of Mathematics, University of Oslo, PO Box 1053 Blindern,
N--0316 Oslo, Norway}
\email{nadiasl@math.uio.no}

\author{Wojciech Szyma{\'n}ski}
\address{Department of Mathematics and Computer Science, University of Southern Denmark,
Campusvej 55, DK--5230 Odense M, Denmark}
\email{szymanski@imada.sdu.dk}

\thanks{J. H. Hong was supported by Basic Science Research
Program through the National Research
Foundation of Korea (NRF) funded by the Ministry of Education, Science and Technology (2010--0022884). W. Szyma{\'n}ski 
was partially supported by the FNU
Forskningsprojekt `Structure and Symmetry'. N. Larsen was supported by the Research Council of Norway. The last two authors 
were also supported by the EU-Network "Noncommutative Geometry" (Contract No. MRTN-CT-2006-031962) and the NordForsk research network  
"Operator Algebra and Dynamics".} 

\begin{abstract}
We consider a product system over
the multiplicative semigroup $\NN^\times$ of Hilbert bimodules
which is implicit in work of S. Yamashita and of the second named author.
We prove directly, using universal properties, that the associated Nica-Toeplitz
algebra is an extension of the $C^*$-algebra $\Qq_\NN$ introduced recently by
Cuntz.
\end{abstract}

\date{August 1, 2011}
\vskip 1cm

\maketitle

\section{Introduction}\label{section:Intro}

In 2006, Cuntz (\cite{Cun1}) initiated investigations of purely infinite and simple $C^*$-algebras
$\Qq_R$ associated to the $ax+b$-semigroup $\NN\rtimes \NN^\times$ over the natural numbers $\NN$ and, more generally, $R\rtimes R^\times$ over a ring R. 
The algebra $\Qq_\NN$ is a crossed product of the Bunce-Deddens
algebra associated to $\QQ$ by the action of the multiplicative semigroup $\NN^\times$, and  $\Qq_\NN$
is also generated by the Bost-Connes $C^*$-algebra $\Cc_\QQ$ (\cite{BC}) and one more unitary generator.
An analogous construction based on $R=\ZZ$ instead led to a purely infinite $C^*$-algebra
$\Qq_\ZZ$ such that $\Qq_\ZZ\cong\Qq_\NN\rtimes\ZZ_2$. Shortly afterwards, this work of Cuntz was
further extended and generalized to the context of integral domains by Cuntz
and Li, \cite{CunLi}, and to arbitrary rings by Li, \cite{Li}.

As it turns out, Cuntz's algebra  $\Qq_\NN$ can be usefully viewed in several different ways. Firstly, in \cite[Example 5.2]{Yam} Yamashita constructs a
topological $k$-graph $\Lambda$  (with $k=\infty$) and shows that the
corresponding graph $C^*$-algebra $C^*(\Lambda)$ is isomorphic to
$\Qq_\NN$. Secondly, Laca and Raeburn demonstrated that
$\Qq_\NN$ arises as a quotient of the Toeplitz algebra corresponding to the quasi-lattice ordered group 
$(\QQ\rtimes\QQ_+^*, \NN\rtimes\NN^\times)$, \cite{Lac-Rae2}. Thirdly, since $\Qq_\NN$ is purely infinite, 
simple and has free-abelian $K_1$-group, it
follows from the Kirchberg-Phillips classification and \cite{Szy} that its stabilization is isomorphic
to a graph $C^*$-algebra. We think it would be very interesting to find an explicit form of such an
isomorphism, but this has not been achieved yet.

Yamashita's approach leading to the claimed isomorphism of $\Qq_\NN$ with an algebra of form
$C^*(\Lambda)$ consists of constructing a row-finite topological $k$-graph
$\Lambda$ with $k=\infty$ in the sense of \cite{Ye}. We wanted to fill out
details in Yamashita's construction. Soon enough we saw that there was a
product system around which was related to work of the second named author
generalizing Exel's crossed product to abelian semigroups, \cite{Lar}.

Our initial motivation for this project was to
understand the structure of the $C^*$-algebras associated to this product system $X$
over $\NN^\times$, and to look into the analysis of KMS states of the universal
$C^*$-algebra $\Tc(X)$ for Nica covariant Toeplitz representations as constructed
by Fowler, \cite{Fow2}. We give an explicit and detailed description of $X$, and
prove a number of relevant properties.  Our main  result,
Theorem~\ref{thm:presentation_T(XA)}, gives a presentation of $\Tc(X)$ in terms of
generators and relations. A consequence of this is that the Cuntz-Pimsner algebra
$\Oo_X$ in Fowler's sense is isomorphic to $\Qq_\NN$.

While we were working on this project it turned out that Brownlowe, an Huef, Laca
and Raeburn \cite{BanHLR} were studying the same product system and the  relations of the associated
$C^*$-algebras to both $\Qq_\NN$ and Laca-Raeburn's Toeplitz algebra $\Tt(\NN\rtimes
\NN^\times)$ of the affine semigroup over the natural numbers from \cite{Lac-Rae2}.
Their approach is somewhat
different and relies on characterising faithful representations of what they call the
additive boundary of $\Tt(\NN\rtimes \NN^\times)$.
We believe that our direct approach, based on constructing Nica covariant
representations, can be useful in understanding more general product systems along
similar lines. We plan to take up the analysis of KMS states on more general algebras $\Tc(X)$ in a
future paper.

Other recent works which recast $\Qq_\NN$ and its generalizations to rings $R$ are \cite{KLQ} and \cite{BE}.

We mention that the main result of the present note was first announced at the
conference on ``Selected topics in Operator Algebras and Non-commutative Geometry'' in Victoria, Canada, in July
2010, where the similar result from \cite{BanHLR} was also announced.

\section{Preliminaries}\label{section:Prelim}

\subsection{Product systems of Hilbert bimodules}\label{Prelim-1}

Let $A$ be a $C^*$-algebra and $X$ be a complex vector space with a right action of $A$.
Suppose that there is a $A$-valued inner product
$\langle \cdot , \cdot \rangle _A$ on $X$ which is conjugate linear in the first variable
and  satisfies
\begin{enumerate}
\item $\langle\xi, \eta\rangle _A =\langle\eta , \xi\rangle_A^*$,
\item $\langle\xi, \eta\cdot a\rangle_A=\langle\xi, \eta\rangle_A\, a$,
\item $\langle\xi, \xi\rangle_A \geq 0 $ and $\langle\xi, \xi\rangle_A=0 \; \Longleftrightarrow\; \xi=0$,
\end{enumerate}
for $\xi,\eta\in X$ and $a\in A$. Then $X$ becomes a right Hilbert $A$-module
when it is complete with respect to the norm given by
$\|\xi\|:=\|\langle\xi,\xi\rangle _A\|^{\frac{1}{2}}$ for $\xi\in X$.

\begin{rmk}
In this paper, we will use Exel's method \cite{Ex} (see also \cite{Bro}, \cite{Lar-Rae},
\cite{Kw-Le} and \cite{Lar}) of constructing $C^*$-valued inner products via transfer operators.
Namely, if $\alpha$ is an endomorphism of
a unital $C^*$-algebra $A$, a {\it transfer operator} for $\alpha$ is a positive continuous linear map $L:A\to A$
satisfying $L(a\alpha (b))=L(a)b$ for $a, b\in A$. If a right $A$-module $X$ is equipped
with a right action $\xi\cdot a=\xi\alpha (a)$, then a $A$-valued pre-inner product on $X$ may
be defined by $\langle \xi,\eta\rangle_A:=L(\xi^*\eta)$ for $\xi, \eta \in X$.
\end{rmk}

A map $T:X\rightarrow X $ is said to be adjointable if there is a map $T^*:X\rightarrow X$
such that $\langle T\xi,\zeta\rangle_A=\langle\xi,T^*\zeta\rangle_A$ for all $\xi,\eta\in X$. An
adjointable map (or operator) is linear and norm-bounded, and the set $\Ll(X)$ of all adjointable
operators on $X$ endowed with the operator norm
is a $C^*$-algebra. The rank-one operator $\theta_{\xi, \eta}$ defined on $X$ as
$$
\theta_{\xi,\eta}(\zeta)=\xi\cdot\langle \eta,\zeta\rangle_A \; \; \text{for} \; \xi, \eta, \zeta\in X,
$$
is adjointable and we have $\theta_{\xi,\eta}^*=\theta_{\eta,\xi}$. Then
$\Kk(X)=\clsp\{\theta_{\xi, \eta}\mid \xi, \eta\in X\}$ is the ideal of (generalized)
compact operators in $\Ll(X)$. We let $I_Y$ denote the identity element in $\Ll(X)$.

\vspace{2mm}
Suppose $X$ is a right Hilbert $A$-module. A $*$-homomorphism $\phi:A\rightarrow\Ll(X)$ induces a left action of $A$
on a $X$ by $a\cdot\xi=\phi(a)\xi$, for $a\in A$ and $\xi\in X$.
Then $X$ becomes a right-Hilbert $A$--$A$-bimodule (we mention that the terminology $C^*$-correspondence over $A$ is also used).
The standard bimodule $_AA_A$ is
equipped with $\langle a, b\rangle _A=a^*b$, and the right and left actions are simply given
by right and left multiplication in $A$, respectively.

For right-Hilbert $A$--$A$-bimodules $X$ and $Y$, the (balanced) tensor product $X\otimes_AY$ becomes
a right-Hilbert $A$--$A$-bimodule with the right action from $Y$,  the left action implemented by the
homomorphism $A\ni a \mapsto\phi(a)\otimes_A I_Y\in\Ll(X\otimes_A Y)$, and the $A$-valued
inner product given by $\langle\xi_1\otimes_A\eta_1 , \xi_2\otimes_A\eta_2\rangle_A=\langle \eta_1, \langle\xi_1,
\xi_2\rangle_A\cdot\eta_2\rangle_A$, 
for $\xi_i\in X$ and $\eta_i\in Y$, $i=1,2$.

\vspace{2mm}
Let $P$ be a multiplicative semigroup with identity $e$, and let $A$ be a $C^*$-algebra.
For each $p\in P$ let $X_p$ be a complex vector space.
Then the disjoint union $X := \bigsqcup_{p\in P}X_{p}$ is a {\em product system} over $P$
if the following conditions hold:
\begin{enumerate}\renewcommand{\theenumi}{P\arabic{enumi}}
\item\label{it:P-1} For each $p\in P\setminus\{e\}$, $X_p$ is a right-Hilbert $A$--$A$-bimodule.
\item\label{it:P-2}$X_e$ equals the standard bimodule $_AA_A$.
\item\label{it:P-3} $X$ is a semigroup such that $\xi\eta\in X_{pq}$ for $\xi\in X_p$ and $\eta\in X_q$,
and for $p, q\in P\setminus\{e\}$, this product extends to an isomorphism
$F^{p, q} : X_p\otimes_A X_q\rightarrow X_{pq}$ of right-Hilbert $A$--$A$-bimodules. If $p$ or $q$ equals $e$
then the corresponding product in $X$ is induced by the left or the right action of $A$.
\end{enumerate}

\begin{rmk}
For $p\in P$, the multiplication on $X$ induces maps
$F^{p, e}:X_p\otimes_A X_e\rightarrow X_p$ and $F^{e, p}:X_e\otimes_A X_p\rightarrow X_p$
by multiplication $F^{p, e}(\xi\otimes a)=\xi\, a$ and $F^{e, p}(a\otimes\xi)=a\, \xi$
for $a\in A$ and $\xi\in X_p$. Note that $F^{p,e}$ is an isomorphism. However, $F^{e,p}$ is
an isomorphism if $\overline{\phi(A)X_p}=X_p$ or, in the terminology from \cite{Fow2}, if $X_p$ is
essential.
\end{rmk}

For each $p\in P$, we denote by $\langle\cdot,\cdot\rangle_p$ the $A$-valued
inner product on $X_p$  and by $\phi_p$ the homomorphism from $A$ into $\Ll(X_p)$.
Due to associativity of the multiplication on $X$, we have
$\phi_{pq}(a)(\xi\eta)=(\phi_p(a)\xi)\eta$ for all $\xi\in X_p$, $\eta\in X_q$, and $a\in A$.

 For each pair $p,q\in P\setminus{e}$,
the isomorphism $F^{p,q} : X_p\otimes_A X_q\rightarrow X_{pq}$
allows us to define a $*$-homomorphism $i_p^{pq}:\Ll(X_{p})\to\Ll(X_{pq})$ by
$i_p^{pq}(S)=F^{p,q}(S\otimes_A I_q)(F^{p,q})^*$ for $S\in\Ll(X_p)$.
In the case $r\neq pq$ we define $i_p^r:\Ll(X_p)\to\Ll(X_r)$ to
be the zero map $i_p^r(S)=0$ for all $S\in\Ll(X_p)$. Further, we let $i_e^q=\phi_q$.

\vspace{2mm}
Many interesting product systems arise over semigroups equipped with additional structures.
In \cite{Ni}, $(G, P)$ is called a
quasi-lattice ordered group if (i) $G$ is a discrete group, (ii) $P$ is a sub-semigroup of $G$ with $P\bigcap P^{-1}=\{e\}$, (iii) with respect to the order
$p\preceq q\; \Leftrightarrow\; p^{-1}q\in P$, every two elements $p, q\in G$ which have
a common upper bound in $P$ have a least upper bound $p\vee q\in P$. If this is the
case we write $p\vee q<\infty$. Here we are interested in lattice-ordered
pairs for which $p\vee q<\infty$ for all $p,q\in P$.

Assuming $X$ is a product system over $P$ with $(G,P)$ a quasi-lattice ordered group, there
naturally arises a certain property related to compactness.
A product system $X=\sqcup_{p\in P}X_p$ is called {\it compactly aligned} if
$i_p^{p\vee q}(S)i_q^{p\vee q}(T)\in\Kk(X_{p\vee q})$ for all $p, q\in P$ with $p\vee q<\infty$
and $S\in\Ll(X_p)$ and $T\in\Ll(X_q)$, \cite{Fow2}. Note that in general neither $i_p^{p\vee q}(S)\in\Kk(X_p)$
nor $i_q^{p\vee q}(T)\in\Kk(X_q)$
is required.


\subsection{$C^*$-algebras associated to product systems}\label{Prelim-2}

Let $(G,P)$ be a quasi-lattice ordered group, $A$ a $C^*$-algebra, and
$X=\sqcup_{p\in P}X_p$  a product system over $P$ of right-Hilbert $A$--$A$-bimodules. A map
$\psi$ from $X$ to a  $C^*$-algebra $C$ is a Toeplitz
representation of $X$ if the following conditions hold:
\begin{enumerate}\renewcommand{\theenumi}{T\arabic{enumi}}
\item\label{it:T-1} for each $p\in P\setminus\{e\}$, $\psi_p:=\psi\vert_{X_p}$ is linear,
\item\label{it:T-2} $\psi_e:A\longrightarrow C$ is a $*$-homomorphism,
\item\label{it:T-3} $\psi_p(\xi)\psi_q(\eta)=\psi_{pq}(\xi\eta)\; \; $ for $ \; \xi\in X_p$, $\eta\in X_q$, $p,q\in P$,
\item\label{it:T-4} $\psi_p(\xi)^*\psi_p(\eta)=\psi_e(\langle\xi, \eta\rangle_p)$ for $\xi, \eta\in X_p$.
\end{enumerate}
As shown in \cite{P}, for each $p\in P$ there exists a $*$-homomorphism
$\psi^{(p)} : \Kk(X_p) \longrightarrow C$ such that $\psi^{(p)}(\theta_{\xi,\eta})=
\psi_p(\xi)\psi_p(\eta)^*\, , \; \text{for}\; \xi,\eta\in X_p$. The Toeplitz representation $\psi$ is
\begin{enumerate}
\item {\it Cuntz-Pimsner covariant} \cite{Fow2} if $\psi^{(p)}(\phi_p(a))=\psi_e(a)\;
\text{for}\; a\in\phi_p^{-1}(\Kk(X_p))$ and all $p\in P$;
\item {\it Nica covariant} \cite{Fow2} if
$$\psi^{(p)}(S)\psi^{(q)}(T)=
    \begin{cases}\psi^{(p\vee q)}(i^{p\vee q}_p(S)i^{p\vee q}_q(T)), &\; \text{if} \; p\vee q <\infty \\
        0 \; ,& \; \; \text{otherwise}
    \end{cases}
$$
for $S\in {\mathcal K}(X_p)$, $T\in {\mathcal K}(X_q)$, $p,q\in P$, provided that $X$ is compactly aligned.
\end{enumerate}

The Toeplitz algebra $\Tt(X)$ associated to the product system $X$ was defined by Fowler as  the universal
$C^*$-algebra for Toeplitz representations, see \cite{Fow2}.
Similarly, the Cuntz-Pimsner algebra $\Oo(X)$ is universal
for the Cuntz-Pimsner covariant Toeplitz representations. In \cite[\S6]{Fow2}, Fowler
introduced a $C^*$-algebra $\Tc(X)$ as a subalgebra of a certain crossed product by $X$, and in
\cite[Theorem 6.3]{Fow2},
he showed that  $\Tc(X)$ is universal for Nica covariant Toeplitz
representations of $X$ on Hilbert space (the definition of such representations is
\cite[Definition 5.1]{Fow2}). It follows from \cite[Theorem 6.3]{Fow2} that for a compactly aligned
product system $X$ over $P$ of essential right-Hilbert $A$--$A$-bimodules, $\Tc(X)$ is universal for the $C^*$-algebraic
version of Nica covariance. It was pointed out in \cite{CLSV} that one can
drop the assumption on each $X_p$ being essential. We let $\iota$ denote the universal Nica covariant Toeplitz
representation of the compactly aligned product system $X$.

Given $X$ compactly aligned, the Cuntz-Nica-Pimsner algebra $\NO_X$ is universal for the Cuntz-Nica-Pimsner
covariant Toeplitz representations introduced in \cite{SY}.
Sims and Yeend's definition of a CNP covariant representation is very technical,
and we do not recall it here. We merely mention
that in general, $\Tc(X)$ is a quotient of $\Tt(X)$ and $\NO_X$ is a quotient of $\Tc(X)$.
 In some situations, $\NO_X$ coincides with $\Oo(X)$, see \cite{SY} for details
and further discussion.

\begin{rmk}
Regarding notation, it was argued  in \cite[Remark 5.3]{BanHLR} that the choice
of  $\Tt(X)$ and  $\Tc(X)$ for $C^*$-algebras generated by
universal representations (with some properties) was unfortunate, because a Toeplitz algebra
of some sort should be generated by the Fock representation of the system.  So instead of
$\Tc(X)$ one may also use the notation $\NT(X)$ of \cite{BanHLR}.
\end{rmk}


\subsection{Cuntz's $\Qq_\NN$ and Laca-Raeburn's $\Tt(\NN\rtimes\NN^\times)$}

In \cite{Cun1}, Cuntz introduced $\Qq_\NN$, the universal $C^*$-algebra generated by a
unitary $u$ and isometries $s_n$, $n\in \NN^\times$, subject to the relations
\begin{enumerate}\renewcommand{\theenumi}{Q\arabic{enumi}}
\item\label{it:Qn-1} $s_ms_n=s_{mn}$,
\item\label{it:Qn-2} $s_m u=u^m s_m$, and
\item\label{it:Qn-3} $\sum_{k=0}^{m-1}u^ks_m s_m^*u^{-k}=1,$
\end{enumerate}
for all $m,n\in \NN^\times$. Cuntz proved that $\Qq_\NN$ is simple and purely
infinite.

In \cite{Lac-Rae2}, Laca and Raeburn studied the semidirect product $\QQ\rtimes \QQ_+^*$ arising from the action of
 $\QQ_+^*$ by multiplication on the additive group $\QQ$. They showed that the pair $(\QQ\rtimes \QQ_+^*, \NN\rtimes \NN^\times)$
 is a quasi-lattice ordered group. For a quasi-lattice ordered group $(G, P)$, the Toeplitz algebra $\Tt(P)$ is generated by
 the operators $T_p$ on $l^2(P)$ given by $T_p\varepsilon_q=\varepsilon_{pq}$ on the
 canonical orthonormal basis $\{\varepsilon_p\}$. By \cite{Lac-Rae2}, $\Tt(\NN\rtimes\NN^\times)$
 is generated by isometries $s$ and $v_p$ for $p\in \Pp$
(where $\Pp$ denotes the collection of all positive prime integers) subject to
the relations
\begin{enumerate}\renewcommand{\theenumi}{LR\arabic{enumi}}
\item\label{it:LR-1} $v_ps=s^pv_p$,
\item\label{it:LR-2} $v_pv_q=v_qv_p$,
\item\label{it:LR-3} $v_p^*v_q=v_qv_p^*$ when $p\ne q$,
\item\label{it:LR-4} $s^*v_p=s^{p-1}v_p s^*$, and
\item\label{it:LR-5} $v_p^* s^k v_p=0$ for $1\leq k<p$.
\end{enumerate}
This readily implies that $\Qq_\NN$ is a quotient of $\Tt(\NN\rtimes\NN^\times)$ by the ideal generated by $ss^*-1$.


\section{A product system over $\NN^\times$ with fibers $C(\TT)$}\label{section:X}

Let $A$ be the $C^*$-algebra $C(\TT)$. We aim to define a product system
$X$ over $\NN^\times$ of right Hilbert $A$--$A$-bimodules whose
Cuntz-Pimsner algebra is isomorphic to $\Qq_\NN$ and whose Toeplitz algebra is a quotient
of Laca and Raeburn's $\Tt(\NN\rtimes\NN^\times)$.

In all that follows, we let $Z(z)=z$  be the standard unitary generator of $A$.
For each $m\in \NN^\times$ and $f\in C(\TT)$, let $f_m$ denote the function $z\mapsto f(z^m)$
in $C(\TT)$. The map $\alpha_m:f\mapsto f_m$ is then an endomorphism of $C(\TT)$, and
 $L_m:A\to A$ defined by $L_m(f)(z)=\frac 1m\sum_{w^m=z}f(w)$ is a transfer operator for $\alpha_m$
in the sense of \cite{Ex}, that is $L_m$ is positive, linear and continuous, and satisfies
$L_m(f\alpha_m(g))=L_m(f)g$ for all $f,g\in A$.
Let $X_m^0$ be the $A$-module based on $C(\TT)$ as  vector space with right action
$\xi\cdot f=\xi\alpha_m(f)$ for $\xi, f\in A$.
It follows from \cite[Lemma 3.3]{Lar-Rae} that $X_m^0$
is complete in the norm induced by the $A$-valued pre-inner product $\langle \xi,\eta\rangle_m=L_m(\xi^*\eta)$.
We can define a left action of $A$ on $X_m^0$ by pointwise multiplication $f\cdot \xi=f\xi$ for $f,\xi \in A$.
Hence $X_m^0$ becomes a right-Hilbert $A$--$A$-bimodule which as a vector space is just $A$. To distinguish
the copies of $A$ corresponding to different $m,n$ in $\NN^\times$ we relabel
$X_m^0$ as $X_{m}$ and write its elements as $\xi\31_m$ with $\xi\in A$.

Thus $X_{m}$ is a right Hilbert $A$--$A$-bimodule with the right action
\begin{equation}\label{eq:right-action-m}
(\xi\31_m)\cdot f =\xi \alpha_m(f)\31_m\text{ for }\xi\31_m\in X_m\text{ and }f\in A,
\end{equation}
inner product given by
\begin{equation}\label{eq:inner-product-m}
\langle \xi\31_m,\eta\31_m\rangle_m=L_m(\xi^*\eta)
\end{equation}
for $\xi, \eta\in A$, and left action
\begin{equation}\label{eq:left-action-m}
f \cdot \xi\31_m= \phi_m(f)(\xi\31_m)= (f\xi)\31_m
\end{equation}
for $f, \xi\in A$. Note that the left action $\phi_m$ is injective for each $m\in\NN^\times$,
and that $\phi_m(1)=I_m$. Whence, in
particular, $\phi_m(A)X_m=X_m$ (that is, the Hilbert
bimodule $X_m$ is essential). Furthermore,
$X_{A,1}$ is identical with the standard bimodule $_A A_A$. We obtain a product system
\begin{equation}\label{eq:xb}
X := \bigsqcup_{m\in\NN^\times}X_m
\end{equation}
with multiplication $X_m\times X_r\to X_{mr}$ given by
\begin{equation}\label{eq:product}
(\xi\31_m)(\eta\31_r):=(\xi\alpha_m(\eta))\31_{mr}
\end{equation}
for $m,r\in\NN^\times$, see also \cite{BanHLR}. We claim that the map
(\ref{eq:product}) extends to an isomorphism of right-Hilbert $A$--$A$-bimodules
\begin{equation}\label{eq:def_prod_A}
F^{m,r}:X_m\otimes_A X_r\to X_{mr}
\end{equation}
for all $m,r\in \NN^\times$. To this end, we first notice that the map defined in (\ref{eq:product})
is bilinear and $A$-balanced and thus extends to a linear map $F^{m,r}$ as in (\ref{eq:def_prod_A}).
A straightforward calculation shows that $F^{m,r}$ is adjointable, with adjoint
$(F^{m,r})^*:X_{mr}\to X_m\otimes_A X_r$ given by
\begin{equation}\label{eq:Fadjoint}
(F^{m,r})^*(\zeta\31_{mr})=(\zeta\31_m)\otimes_A \31_r.
\end{equation}
To argue that $F^{m,r}$ is a unitary isomorphism of right $A$-modules we need to show that $\langle F^{m,r}(\xi_1\otimes_A \eta_1), F^{m,r}(\xi_2\otimes_A
\eta_2)\rangle_{mr}=\langle\eta_1, \langle \xi_1,\xi_2\rangle_m\cdot \eta_2\rangle_r$
for all $\xi_i\in X_{m}$ and $\eta_i\in X_{A, r}$,
$i=1,2$. This amounts to proving that $L_{mr}=L_r\circ L_m$, a fact which may be easily
verified.
In the terminology of \cite{Lar}, the transfer operators
$L_m$ form an action of $\NN^\times$ on $A$.

 It is also clear from the definition
that $F^{m,r}$ commutes with the left action of $A$.
As noted, for each pair $m,r\in\NN^\times$ we define an embedding
$i_m^{mr}:\Ll(X_m)\to\Ll(X_{mr})$ as
\begin{equation}\label{eq:imr}
i_m^{mr}(S)=F^{m,r}(S\otimes \id_r)(F^{m,r})^*.
\end{equation}

Before discussing the $C^*$-algebras associated to $X$ we prove
the following simple but useful lemma. We first introduce some terminology. Fix $m\in \NN^\times$, and consider the
action $\rho_m$ by rotations of the cyclic group $\ZZ_m$ on $A$, thus $\rho_m(j)(Z)=e^{2\pi i j/m}Z$. Let
$A\rtimes\ZZ_m$ be the corresponding crossed product. We identify $g\in \ZZ_m$
with the corresponding unitary in $A\rtimes \ZZ_m$ implementing the rotation by $e^{2\pi i g/m}$. We view $e_m=\sum_{g\in\ZZ_m}g$ as
an element of $A\rtimes \ZZ_m$. It is known that there is a conditional
expectation
\begin{equation}\label{eq:cond_exp_A}
E_m: A\to A^\rho, \,\,E_m(a)=\frac 1m\sum_{j=0}^{m-1}\rho_m(j)(a)
\end{equation}
for $a\in A$. Then $E_m(Z^kZ^{*l})$ is zero when  $k\not\equiv l\;(\operatorname{mod} m)$
and is $Z^kZ^{*l}$ otherwise.

\begin{lemma}\label{lem:leftact_compacts}
For each $m\in\NN^\times$ we have
\begin{enumerate}
\item $\phi_m(A)\subseteq \Kk(X_m)$, and
\item the linear span of $\langle X_m,\,X_m \rangle _m$ is dense in $A$ (that is, $X_m$ a full $A$-module).
\end{enumerate}
\end{lemma}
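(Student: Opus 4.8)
The plan is to reduce everything to one clean observation: for each $m$ the composite $\alpha_m\circ L_m$ is exactly the conditional expectation $E_m$ of \eqref{eq:cond_exp_A}. Indeed, for $h\in A$ one has $\alpha_m(L_m(h))(z)=L_m(h)(z^m)=\frac1m\sum_{w^m=z^m}h(w)=\frac1m\sum_{j=0}^{m-1}h(e^{2\pi ij/m}z)=E_m(h)(z)$, because the solutions of $w^m=z^m$ are precisely the points $e^{2\pi ij/m}z$. Feeding this into the right action \eqref{eq:right-action-m} and the inner product \eqref{eq:inner-product-m}, every rank-one operator on $X_m$ acquires the transparent form
\begin{equation*}
\theta_{\xi\31_m,\,\eta\31_m}(\zeta\31_m)=\xi\,\alpha_m\big(L_m(\eta^*\zeta)\big)\31_m=\xi\,E_m(\eta^*\zeta)\31_m .
\end{equation*}

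For part (1) I would guess, and then verify, the finite-rank formula
\begin{equation*}
\phi_m(f)=\sum_{j=0}^{m-1}\theta_{(fZ^j)\31_m,\,Z^j\31_m}.
\end{equation*}
To check it, apply both sides to a monomial $\zeta=Z^k$ and use that $E_m(Z^\ell)=Z^\ell$ when $m\mid\ell$ and $E_m(Z^\ell)=0$ otherwise. The $j$-th summand then contributes $fZ^j\,E_m(Z^{k-j})$, which equals $fZ^k$ exactly when $j\equiv k\pmod m$ and vanishes otherwise; summing over $j=0,\dots,m-1$ picks out the single matching index and yields $fZ^k=\phi_m(f)(Z^k)$. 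Since $L_m$ is a unital positive, hence contractive, map we have $\|\cdot\|_{X_m}\le\|\cdot\|_\infty$, so trigonometric polynomials are dense in $X_m$; as both sides are bounded operators, the identity on monomials extends to all of $X_m$. This exhibits $\phi_m(f)$ as a finite sum of rank-one operators, so $\phi_m(A)\subseteq\Kk(X_m)$; taking $f=1$ gives $I_m=\sum_{j=0}^{m-1}\theta_{Z^j\31_m,\,Z^j\31_m}$ as a by-product.

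Part (2) is shorter. Because $L_m(1)=1$, we get $\langle\31_m,\31_m\rangle_m=1$, so $\lsp\langle X_m,X_m\rangle_m$ already contains the unit of $A$. Moreover this span is a right ideal of $A$: the transfer-operator identity $L_m(h\,\alpha_m(a))=L_m(h)a$ gives $\langle\xi\31_m,\eta\31_m\rangle_m\,a=\langle\xi\31_m,(\eta\31_m)\cdot a\rangle_m$. A right ideal of the unital commutative algebra $A$ that contains $1$ must be all of $A$, so in fact $\lsp\langle X_m,X_m\rangle_m=A$, which is stronger than the asserted density. (Alternatively one computes directly $\langle\31_m,Z^{mn}\31_m\rangle_m=L_m(Z^{mn})=Z^n$ for every $n\in\ZZ$, recovering all Fourier modes and hence a dense subalgebra.)

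The only genuine content is isolating the correct finite family of rank-one operators in (1); once one recognises $\alpha_m\circ L_m=E_m$ and that $\{Z^j\}_{j=0}^{m-1}$ is adapted to the $\ZZ_m$-grading implemented by $E_m$, both assertions collapse to bookkeeping on Fourier monomials. I expect the only mildly technical step to be the passage from monomials to all of $X_m$, which is dispatched by the contraction estimate $\|\cdot\|_{X_m}\le\|\cdot\|_\infty$ noted above.
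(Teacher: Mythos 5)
Your proof is correct, but it takes a genuinely different route from the paper's. For part (1) the paper argues structurally: it passes to the crossed product $A\rtimes\ZZ_m$ of the rotation action, invokes freeness and saturation of that action (citing Phillips) to conclude that the ideal generated by $e_m$ is all of $A\rtimes\ZZ_m$, and then transports this through the covariant representation $(\phi_m,U)$ on $X_m$, under which that ideal maps onto $\Kk(X_m)$; part (2) is then deduced from part (1), since fullness must fail only if $\Kk(X_m)$ misses the identity operator. You instead identify $\alpha_m\circ L_m=E_m$ and write down the explicit finite-rank resolution $\phi_m(f)=\sum_{j=0}^{m-1}\theta_{(fZ^j)\31_m,\,Z^j\31_m}$, verified on Fourier monomials and extended by the contraction estimate $\|\cdot\|_{X_m}\le\|\cdot\|_\infty$; and you prove (2) directly from the transfer identity (indeed $\langle\31_m,\alpha_m(a)\31_m\rangle_m=L_m(1)a=a$ shows the span is exactly $A$, slightly stronger than density). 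Your route is more elementary and self-contained: it avoids crossed-product machinery entirely, and as by-products it yields both the identity $\phi_m(1)=\sum_{k=0}^{m-1}\theta_{Z^k\31_m,Z^k\31_m}$ of \eqref{eq:compactidentity}, which the paper uses later in Proposition~\ref{prop:cuntz-pimsner-xb} without separate proof, and the identity $\alpha_m\circ L_m=E_m$ of Lemma~\ref{lemma:Em_from_alpha_Lm}(a). What the paper's argument buys in exchange is generality: the freeness/saturation mechanism does not depend on having an explicit family like $\{Z^j\}_{j=0}^{m-1}$ adapted to the grading, so it transfers to free actions on spaces where no such Fourier-type basis is available.
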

\begin{proof}
Ad (1). The ideal of $A\rtimes\ZZ_m$ generated by $e_m$
coincides with $\clsp Ae_m A$. Since the action is free it is also saturated, \cite{Ph}.
Thus $\lsp Ae_m A=A\rtimes \ZZ_m$ and, consequently, $A \subseteq\lsp A e_m A$.

Now we observe that the pair $(\phi_m,\,U)$, with $(U_g \xi)(z)=\xi(g^{-1}z)$,  is
a covariant representation for this dynamical system in $\Ll(X_m)$. For $f,h\in C(\TT)$, the
corresponding representation of the crossed product sends $fe_m \overline{h}$ to
$\theta_{f,h}$. Thus the image of the ideal generated by $e_m$ coincides with  $\Kk(X_m)$.
It follows that $\phi_m(A)\subseteq \Kk(X_m)$, as required.

Ad (2). This follows from part (1) of the lemma, for otherwise $\clsp(\langle X_m,\,X_m \rangle _m)$
would be a proper ideal of $A$ and thus $\Kk(X_m)$ would not contain the identity operator.
\end{proof}

Now we move to describing $C^*$-algebras associated to $X$ from \eqref{eq:xb}. Here the
quasi-lattice ordered group $(\QQ_+^*, \NN^\times)$ is lattice ordered:
every pair of elements $(m, n)$ admits
a least upper bound equal to their least common multiple $m\vee n$. Further,
the bimodules $X_m$ have good properties: they are essential and full, and
the left action of $A$ is by compact operators, cf. Lemma
\ref{lem:leftact_compacts}. It follows from \cite[Proposition 5.8]{Fow2} that $X$ is compactly aligned, and
\cite[Theorem 6.3]{Fow2} shows that $\Tc(X)=\clsp\{\imath(\xi)\imath(\eta)^*\mid \xi\in X_m, \,
\eta\in X_n, m,n\in \NN^\times\}$.

\begin{rmk}\label{densesubalgebra}
As shown by Fowler, if  $\xi,\eta\in X$ then
$\imath(\xi)^*\imath(\eta)$ can be approximated by linear combinations of
elements of $\imath(X)\imath(X)^*$. However, a closer inspection of the proof
of \cite[Proposition 5.10]{Fow2} reveals that if the compact operators in each fiber contain the
corresponding identity operator then $\imath(\xi)^*\imath(\eta)$ is itself a linear combination of
elements of $\imath(X)\imath(X)^*$. This implies that in the presently considered
case $\lsp\{\imath(\xi)\imath(\eta)^*\mid \xi\in X_m,\, \eta\in X_n, m,n\in \NN^\times\}$
is not merely a dense self-adjoint subspace of $\Tc(X)$ but a dense $*$-subalgebra.
\end{rmk}

We set $i_m:=\imath\vert_{X_m}$ for $m\in \NN^\times$
(in particular, $i_1=\imath\vert_A$), and denote by $i^{(m)}$ the
corresponding homomorphism of $\Kk(X_m)$. The main result of this paper is the following
characterization of $\Tc(X)$.

\begin{theorem}\label{thm:presentation_T(XA)}
The element
$u :=i_1(Z)$ is a unitary in $\Tc(X)$, each of $w_m=i_m(\31_m)$
for $m\in \NN^\times$ is an isometry, and $u,w_m$  satisfy the relations
\begin{enumerate}\renewcommand{\theenumi}{B\arabic{enumi}}
\item\label{it:ToeB0} $w_{mn}=w_mw_n$ for all $m, n\in \NN^\times$,
\item\label{it:ToeB1} $w_mu=u^mw_m$ for all $m\in \NN^\times$,
\item\label{it:ToeB2} $w_p^*w_q=w_qw_p^*$ if $p,q$ distinct primes, and
\item\label{it:ToeB3} $w_p^*u^k w_p=0$ if $p$ is prime and $1\leq k<p$.
\end{enumerate}

Furthermore, $\Tc(X)$ is the universal $C^*$-algebra for the relations
\eqref{it:ToeB0}--\eqref{it:ToeB3}.
\end{theorem}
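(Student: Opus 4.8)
The plan is to prove the theorem in two halves. First I would verify that $u$ and the $w_m$ lie in $\Tc(X)$ and satisfy relations \eqref{it:ToeB0}--\eqref{it:ToeB3}; this establishes the existence of a homomorphism from the universal $C^*$-algebra onto $\Tc(X)$. Second, and this is the substantive part, I would construct an inverse by exhibiting a Nica covariant Toeplitz representation of $X$ inside the universal $C^*$-algebra $B$ defined by \eqref{it:ToeB0}--\eqref{it:ToeB3}, and invoke the universal property of $\Tc(X)$ from \cite[Theorem 6.3]{Fow2}.

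For the first half, the relations are essentially a translation of the product-system structure into the generators. That $u=i_1(Z)$ is unitary follows because $\iota_e=\psi_e$ is a unital $*$-homomorphism on $A=C(\TT)$ and $Z$ is a unitary in $A$. That $w_m=i_m(\31_m)$ is an isometry follows from \eqref{it:T-4} together with $\langle\31_m,\31_m\rangle_m=L_m(1)=1$. Relation \eqref{it:ToeB0} is immediate from \eqref{it:T-3} and $\31_m\31_n=\31_{mn}$ via \eqref{eq:product}. For \eqref{it:ToeB1} I would compute $w_mu=i_m(\31_m)i_1(Z)=i_m(\31_m\cdot Z)$ using \eqref{it:T-3}, and $\31_m\cdot Z=\alpha_m(Z)\31_m=Z^m\31_m$ by \eqref{eq:right-action-m}, then rewrite $Z^m\31_m$ as $u^m w_m$ using \eqref{it:ToeB1}'s left side read backwards; here I must be careful to track whether $Z^m\31_m$ is handled by the left action $\phi_m$, since $i_m(Z^m\31_m)=i_m(\phi_m(Z^m)\31_m)=i_1(Z^m)i_m(\31_m)=u^m w_m$ by \eqref{it:T-3} applied with the $e$-fiber. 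Relations \eqref{it:ToeB2} and \eqref{it:ToeB3} are the Nica-covariance conditions specialized to the present system: for distinct primes $p,q$ one has $p\vee q=pq$ and the compact operators $\phi_p(1)=I_p$, $\phi_q(1)=I_q$ behave multiplicatively under $i_p^{pq}$, $i_q^{pq}$, giving \eqref{it:ToeB2}; and \eqref{it:ToeB3} expresses that $w_p^*u^k w_p$ picks out an inner product $\langle \31_p, Z^k\31_p\rangle_p=L_p(Z^k)$, which vanishes for $1\le k<p$ since $L_p(Z^k)(z)=\tfrac1p\sum_{w^p=z}w^k=0$ when $p\nmid k$.

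The harder direction is surjectivity of the map $B\to\Tc(X)$, i.e. building a representation of $X$ in $B$. The natural candidate sets $\psi_e(f)=f(u)$ for $f\in C(\TT)=A$ (well defined since $u$ is unitary) and, for general $m$, defines $\psi_m(\xi\31_m)=\xi(u)\,w_m$, extending $m$ first to prime powers and then to all $m$ via \eqref{it:ToeB0}. I would need to check \eqref{it:T-1}--\eqref{it:T-4}: linearity and the $*$-homomorphism property are formal, \eqref{it:T-3} reduces to the multiplication formula \eqref{eq:product} together with \eqref{it:ToeB1}, and \eqref{it:T-4} amounts to verifying $w_m^* f(u) w_m = (L_m(\,\cdot\,))(u)$ on the dense span of monomials $Z^k$, which uses \eqref{it:ToeB1} and \eqref{it:ToeB3} to compute $w_m^* u^k w_m$. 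The main obstacle, and the step deserving most care, is verifying Nica covariance of $\psi$: one must show $\psi^{(m)}(S)\psi^{(n)}(T)$ obeys the case distinction on $m\vee n$. Because $\Kk(X_m)=\overline{\lsp}\{\theta_{\xi,\eta}\}$ and, by Lemma~\ref{lem:leftact_compacts}(1), $\phi_m(A)\subseteq\Kk(X_m)$ with $I_m=\phi_m(1)$ compact, it suffices to check the identity on rank-one operators and use \eqref{eq:imr}; the generic prime-factorization combinatorics of $m\vee n=\operatorname{lcm}(m,n)$ then follows by reducing via \eqref{it:ToeB0} to the coprime and prime-power cases already encoded in \eqref{it:ToeB2} and \eqref{it:ToeB3}. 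Once $\psi$ is shown Nica covariant, \cite[Theorem 6.3]{Fow2} yields a homomorphism $\Tc(X)\to B$ sending $\iota(\xi)\mapsto\psi(\xi)$, and checking it inverts the first map on the generators $u,w_m$ completes the proof.
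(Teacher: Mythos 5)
Your overall architecture coincides with the paper's: verify that $u$ and the $w_m$ satisfy \eqref{it:ToeB0}--\eqref{it:ToeB3} in $\Tc(X)$, then, given a unitary $U$ and isometries $W_m$ satisfying the relations, build a Nica covariant Toeplitz representation $\psi$ with $\psi_m(f\31_m)=f(U)W_m$ and invoke the universal property of $\Tc(X)$ to produce the inverse homomorphism. The forward half of your argument is essentially fine; indeed your derivation of \eqref{it:ToeB3} directly from \eqref{it:T-4}, namely $w_p^*u^kw_p=i_1(\langle \31_p,Z^k\31_p\rangle_p)=i_1(L_p(Z^k))=0$, is cleaner than the paper's route through $\theta_{\31_p,\31_p}\theta_{Z^k\31_p,\31_p}=0$. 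One repair is needed for \eqref{it:ToeB2}: you apply Nica covariance to $\phi_p(1)=I_p$ and $\phi_q(1)=I_q$, but by \eqref{eq:compactidentity} one has $i^{(p)}(\phi_p(1))=\sum_{k=0}^{p-1}u^kw_pw_p^*u^{-k}$, not $w_pw_p^*$, so the resulting identity is a sum identity from which $w_p^*w_q=w_qw_p^*$ does not visibly follow; the paper instead applies Nica covariance to the rank-one operators $\theta_{\31_p,\31_p}$ and $\theta_{\31_q,\31_q}$, which (together with Corollary~\ref{lem_prod_of_compacts}) gives $w_pw_p^*w_qw_q^*=w_{pq}w_{pq}^*=w_pw_qw_p^*w_q^*$, and then cancels the isometries.

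The genuine gap is in the step you yourself flag as the main obstacle: Nica covariance of $\psi$. Reducing to rank-one operators is correct, but your claim that the ``generic prime-factorization combinatorics of $m\vee n$ follows by reducing via \eqref{it:ToeB0} to the coprime and prime-power cases already encoded in \eqref{it:ToeB2} and \eqref{it:ToeB3}'' is a statement of intent, not an argument, and it is doubtful it can be carried out as stated. Nica covariance is an equality of two concretely computable quantities, and for a general pair $(m,n)$ with $d=\gcd(m,n)$ both sides involve $d$ and a B\'ezout pair $1=\alpha'm/d-\beta'n/d$ in an essential way. On the product-system side one must identify $i_m^{m\vee n}(\theta_{Z^i\31_m,Z^k\31_m})\,i_n^{m\vee n}(\theta_{Z^l\31_n,Z^j\31_n})$ as $0$ when $k\not\equiv l \pmod{d}$ and as $\theta_{Z^{i+m\alpha'(l-k)/d}\31_{m\vee n},\,Z^{j+n\beta'(l-k)/d}\31_{m\vee n}}$ otherwise; this is Corollary~\ref{lem_prod_of_compacts} and Remark~\ref{rmk:arbitrary_products_compacts}, and it rests on the conditional-expectation identity $E_m\circ E_n=E_{m\vee n}$ of Lemma~\ref{lemma:Em_from_alpha_Lm} together with Lemma~\ref{lem:Em_compact}. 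On the algebra side one must derive, purely from \eqref{it:ToeB0}--\eqref{it:ToeB3}, the matching identity \eqref{eq:more_relations} for $W_mW_m^*U^{*k}U^lW_nW_n^*$, which requires the commutation formulas of Lemmas~\ref{lem:consequences_of_relations} and~\ref{lem:more_consequences_of_relations}. Neither computation factors through the coprime and prime-power cases: already for $m=6$, $n=4$ the formulas involve a B\'ezout pair for $(3,2)$ and the fiber $X_{12}$, and you neither show nor cite any result asserting that Nica covariance for arbitrary pairs follows from these special cases. Without the two explicit computations and their comparison, the existence of the homomorphism from $\Tc(X)$ to the universal algebra of the relations --- the heart of the theorem --- has not been established.
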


We mention that the same presentation of $\Tc(X)$ was identified in \cite[Theorem 5.2]{BanHLR}. The method of proof there is different,
and uses the characterization of faithful representations on the additive boundary of $\Tt(\NN\rtimes \NN^\times)$, which
is shown to be $\Tc(X)$.

To prove the theorem we will need to understand what Nica-covariance means for the product
system $X$. For this we first derive a number of consequences of the
relations
\eqref{it:ToeB0}--\eqref{it:ToeB3}.

\begin{lemma}\label{lem:consequences_of_relations}
Assume the relations \eqref{it:ToeB0} and \eqref{it:ToeB1}. Then
\begin{enumerate}
\item\label{it:ToeB1-star} $w_mu^*=u^{*m}w_m$
\item\label{it:ToeB2-star} $u^lw_m^*=w_m^*u^{lm}$ for all $l\in \NN$.
\item\label{it:ToeB3-mn} If also \eqref{it:ToeB2} is satisfied, then
$w_mw_n^*=w_n^*w_m$ if $gcd(m, n)=1$.
\end{enumerate}
\end{lemma}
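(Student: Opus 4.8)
The plan is to derive each of the three identities purely algebraically from the stated relations, treating $u$ as a unitary and each $w_m$ as an isometry. For part~\eqref{it:ToeB1-star}, I would start from \eqref{it:ToeB1}, namely $w_m u = u^m w_m$. Since $u$ is unitary, I can rewrite this as $w_m = u^m w_m u^{-1}$, and then multiply on the left by $u^{-m}$ and on the right by $u$ to isolate a relation for $u^*$; more directly, taking adjoints of \eqref{it:ToeB1} gives $u^* w_m^* = w_m^* u^{*m}$, which is an intertwining relation on the other side. To get $w_m u^* = u^{*m} w_m$ I would instead manipulate \eqref{it:ToeB1} itself: from $w_m u = u^m w_m$ I multiply on the left by $u^{-m}$ and on the right by $u^{-1}$, yielding $u^{-m} w_m = w_m u^{-1}$, i.e.\ $w_m u^* = u^{*m} w_m$ because $u^{-1} = u^*$. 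This is a one-line computation.

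For part~\eqref{it:ToeB2-star}, I would prove $u^l w_m^* = w_m^* u^{lm}$ by first establishing the case $l=1$ and then inducting on $l$. The base case comes from taking adjoints in part~\eqref{it:ToeB1-star}: from $w_m u^* = u^{*m} w_m$ I obtain $u w_m^* = w_m^* u^{m}$ after conjugating. The inductive step is immediate: assuming $u^l w_m^* = w_m^* u^{lm}$, I multiply on the left by $u$ and apply the base case to push the single $u$ through $w_m^*$, giving $u^{l+1} w_m^* = u\, w_m^* u^{lm} = w_m^* u^{m} u^{lm} = w_m^* u^{(l+1)m}$. The only mild subtlety is that the statement is asserted for $l \in \NN$, so I should note the $l=0$ case holds trivially.

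Part~\eqref{it:ToeB3-mn} is the one genuine obstacle, since it requires passing from the commutation relation \eqref{it:ToeB2} for \emph{distinct primes} to a statement about \emph{coprime} $m,n$. The strategy is to reduce to the prime case via the multiplicativity relation \eqref{it:ToeB0}. Writing $m = \prod p_i^{a_i}$ and $n = \prod q_j^{b_j}$ with all $p_i \neq q_j$ (using $\gcd(m,n)=1$), I factor $w_m = \prod w_{p_i}^{a_i}$ and $w_n = \prod w_{q_j}^{b_j}$ using \eqref{it:ToeB0}, so that $w_m w_n^* = \bigl(\prod w_{p_i}^{a_i}\bigr)\bigl(\prod w_{q_j}^{*b_j}\bigr)$. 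The goal is to move every $w_{q_j}^*$ past every $w_{p_i}$; each such elementary swap is governed by \eqref{it:ToeB2}, which gives $w_p^* w_q = w_q w_p^*$ for $p \neq q$. The hard part is bookkeeping: I must verify that repeated application of the single-factor commutation \eqref{it:ToeB2} lets me commute the \emph{powers} $w_{p_i}^{a_i}$ past $w_{q_j}^{*b_j}$, which follows by an easy induction on the exponents once the base swap is in hand, and then organize these swaps so that all the starred factors end up on the left. I would set this up as a double induction (or simply remark that commuting the whole block follows by iterating the primewise relation), arriving at $w_n^* w_m$. I expect the cleanest writeup to isolate the primewise commutation of powers as a preliminary observation, then assemble the general coprime case; the conceptual content is light, but care is needed to confirm that coprimality guarantees every prime appearing in $m$ is distinct from every prime appearing in $n$, which is exactly what makes \eqref{it:ToeB2} applicable to each swap.
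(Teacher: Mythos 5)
Your proposal is correct and follows essentially the same route as the paper: a one-line algebraic manipulation of \eqref{it:ToeB1} (the paper takes adjoints twice, you multiply by inverses, which is equivalent) for part~\eqref{it:ToeB1-star}, induction from the adjointed relation $uw_m^*=w_m^*u^m$ for part~\eqref{it:ToeB2-star}, and prime factorization via \eqref{it:ToeB0} with repeated application of \eqref{it:ToeB2} for part~\eqref{it:ToeB3-mn}. The paper states the last step in one sentence, whereas you spell out the bookkeeping of commuting powers of prime isometries past starred blocks; both arguments are the same in substance.
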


\begin{proof}
Taking adjoints on both sides of \eqref{it:ToeB1} and using that $u$ is a
unitary gives $uw_m^*=w_m^*u^m$. From this relation
\eqref{it:ToeB1-star} follows upon taking adjoints, and
\eqref{it:ToeB2-star} by induction. For \eqref{it:ToeB3-mn} we write $m,n$
in prime factorization  and apply repeatedly relation \eqref{it:ToeB2}.
\end{proof}

\begin{lemma}\label{lem:more_consequences_of_relations}
Assume the relations \eqref{it:ToeB0}, \eqref{it:ToeB1} and \eqref{it:ToeB3}.
For $m,n\in \NN^\times$ and $l\in \ZZ$
\begin{equation}\label{eq:simplification}
w_m^*u^lw_n=\begin{cases}0&\text{ if } l\not\equiv 0(\text{mod }gcd(m,n))\\
w_{n^{-1}(m\vee n)}^*u^{l/gcd(m,n)}w_{m^{-1}(m\vee n)}&\text{ otherwise}.\end{cases}
\end{equation}
\end{lemma}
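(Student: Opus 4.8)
The plan is to reduce the whole computation to a single ``diagonal'' expression and to establish for it the clean formula
\begin{equation*}
w_d^*u^lw_d=\begin{cases}0&\text{if }d\nmid l,\\ u^{l/d}&\text{if }d\mid l,\end{cases}
\end{equation*}
where $d=\gcd(m,n)$. Granting this, the lemma follows at once. Writing $m=d\,m'$ and $n=d\,n'$, so that $m'=n^{-1}(m\vee n)$ and $n'=m^{-1}(m\vee n)$, relation \eqref{it:ToeB0} factors $w_m=w_dw_{m'}$ and $w_n=w_dw_{n'}$; hence
\begin{equation*}
w_m^*u^lw_n=w_{m'}^*\bigl(w_d^*u^lw_d\bigr)w_{n'}.
\end{equation*}
Substituting the diagonal formula yields $0$ when $d\nmid l$, and $w_{m'}^*u^{l/d}w_{n'}$ when $d\mid l$, which is precisely the right-hand side of \eqref{eq:simplification}.

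The workhorses are two commutation identities. From \eqref{it:ToeB1} and Lemma~\ref{lem:consequences_of_relations} (extending to negative exponents by taking adjoints), one has, for all $k\in\NN^\times$ and $j\in\ZZ$,
\begin{equation*}
w_ku^j=u^{kj}w_k\qquad\text{and}\qquad w_k^*u^{kj}=u^jw_k^*.
\end{equation*}
The second identity lets me strip off the part of the exponent divisible by $d$: writing $l=dq+r$ with $0\le r<d$,
\begin{equation*}
w_d^*u^lw_d=w_d^*u^{dq}\,u^rw_d=u^q\,w_d^*u^rw_d.
\end{equation*}
If $r=0$ this equals $u^qw_d^*w_d=u^{l/d}$ because $w_d$ is an isometry, so the diagonal formula is reduced to the vanishing statement
\begin{equation}\label{eq:vanish}
w_d^*u^rw_d=0\qquad\text{for }1\le r<d.
\end{equation}

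I expect \eqref{eq:vanish} to be the main obstacle, and I would prove it by induction on the number of prime factors of $d$ counted with multiplicity. The base case $d=p$ prime is exactly \eqref{it:ToeB3}. For the inductive step, factor $d=p\,d'$ with $p$ prime, so that $w_d=w_pw_{d'}$ by \eqref{it:ToeB0} and
\begin{equation*}
w_d^*u^rw_d=w_{d'}^*\bigl(w_p^*u^rw_p\bigr)w_{d'}.
\end{equation*}
Splitting $r=ps+t$ with $0\le t<p$ and applying the exponent-stripping at the prime $p$ gives $w_p^*u^rw_p=u^s\,w_p^*u^tw_p$. If $1\le t<p$, this is $0$ by \eqref{it:ToeB3}, whence $w_d^*u^rw_d=0$. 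If $t=0$, then $p\mid r$ and $w_p^*u^rw_p=u^s$ with $s=r/p$; since $1\le r<d=pd'$, we have $1\le s<d'$, so $w_d^*u^rw_d=w_{d'}^*u^sw_{d'}=0$ by the induction hypothesis. This proves \eqref{eq:vanish}.

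The point to stress is that \eqref{it:ToeB3} is available only for a prime $p$ and only for $1\le k<p$; the whole argument works because the multiplicativity \eqref{it:ToeB0} together with the exponent-stripping reduces a composite modulus $d$ and an arbitrary integer exponent precisely to this restricted prime case, so nothing beyond \eqref{it:ToeB0}, \eqref{it:ToeB1}, \eqref{it:ToeB3} and their formal consequences is used. The degenerate cases $m'=1$ or $n'=1$ (that is, $m\mid n$ or $n\mid m$) need no separate treatment, since $w_1$ is the unit.
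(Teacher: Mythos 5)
Your proof is correct, and it takes a genuinely different — and in fact more robust — route than the paper's. The paper handles the vanishing case in one shot: it picks a prime divisor $p$ of $d=\gcd(m,n)$ with $p\nmid l$, writes $l=l_1p+r$ with $r\not\equiv 0 \ (\operatorname{mod} p)$, commutes $u^{l_1p}$ past $w_p$, and kills $w_p^*u^rw_p$ by \eqref{it:ToeB3}. But such a prime need not exist when $d\nmid l$: for $m=n=4$ and $l=2$ the only prime divisor of $d=4$ is $2$, which divides $l$, so the paper's case analysis simply does not cover this situation, and its proof has a genuine lacuna there. Your argument — reduce via \eqref{it:ToeB0} to the diagonal term $w_d^*u^lw_d$, then induct on the number of prime factors of $d$ — supplies exactly the missing step: when the chosen prime $p$ divides the residual exponent you divide it out, $w_p^*u^{ps}w_p=u^s$, and pass to the smaller modulus $d'=d/p$ with the induction hypothesis. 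The remaining ingredients (the factorization $w_m=w_dw_{m'}$, the exponent-stripping identities $w_ku^j=u^{kj}w_k$ and $w_k^*u^{kj}=u^jw_k^*$ for $j\in\ZZ$ obtained from \eqref{it:ToeB1} and Lemma~\ref{lem:consequences_of_relations}, the isometry relation, and the identification $m/d=n^{-1}(m\vee n)$, $n/d=m^{-1}(m\vee n)$) are common to both arguments, and your treatment of the case $d\mid l$ coincides with the paper's. In short: the paper's proof is shorter where it applies, but yours is the complete one.
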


\begin{proof}
If $gcd(m,n)$ does not divide $l$, there is a prime divisor $p$ of $gcd(m,n)$
such that for unique integers $l_1$ and $r$ with $1<r<p$ we have $l=l_1 p+r$.
Writing $m=pm_1$ and $n=pn_1$, assuming $l\geq 0$ and using \eqref{it:ToeB3} in the last
equality gives
$$
w_m^*u^lw_n=w_{pm_1}^*u^ru^{l_1p}w_{pn_1}=w_{m_1}^*w_p^*u^rw_pu^{l_1}w_{n_1}=0.
$$
By Lemma~\ref{lem:consequences_of_relations} \eqref{it:ToeB2-star}, the case $l<0$ follows.
If $gcd(m,n)$ divides $l$,
Lemma~\ref{lem:consequences_of_relations} \eqref{it:ToeB2-star} and the fact that $m/gcd(m,n)=
n^{-1}(m\vee n)$, $n/gcd(m,n)=m^{-1}(m\vee n)$ imply the claim.
\end{proof}

\begin{cor}
Assume the relations \eqref{it:ToeB0}--\eqref{it:ToeB3}.
Given $m,n\in \NN^\times$ write $d=gcd(m,n)$, let $m'=m/d$ and $n'=n/d$,
and choose integers $\alpha',\beta'$  such that $1=\alpha'm'-\beta'n'$.
Then for $k, l\in \ZZ$ we have
\begin{equation}\label{eq:more_relations}
w_mw_m^*u^{*k}u^lw_nw_n^*
=\begin{cases}
0&\text{ if }l\not\equiv k(\text{mod }gcd(m,n))\\
u^{m\alpha'(l-k)/d}w_{m\vee n}w_{m\vee n}^*u^{* (n\beta' (l-k)/d)}
&\text{ otherwise}.
\end{cases}
\end{equation}
\end{cor}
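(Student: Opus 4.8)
The strategy is to reduce the whole expression to the two-sided word $w_m^*u^{\,l-k}w_n$, which is already controlled by Lemma~\ref{lem:more_consequences_of_relations}, and then to carry the outer projections $w_mw_m^*$ and $w_nw_n^*$ past the surviving $u$-powers using only \eqref{it:ToeB0}, \eqref{it:ToeB1} and Lemma~\ref{lem:consequences_of_relations}. First I would use that $u$ is unitary to write $u^{*k}u^l=u^{l-k}$, set $s=l-k$ and $d=\gcd(m,n)$, and regroup the left-hand side as $w_m\bigl(w_m^*u^sw_n\bigr)w_n^*$. Applying Lemma~\ref{lem:more_consequences_of_relations} to the middle factor produces the dichotomy at once: if $d\nmid s$, equivalently $l\not\equiv k\,(\mathrm{mod}\ d)$, then $w_m^*u^sw_n=0$ and the expression vanishes, giving the first case. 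Otherwise $d\mid s$; writing $t=s/d=(l-k)/d$ and using $n^{-1}(m\vee n)=m/d=m'$ and $m^{-1}(m\vee n)=n/d=n'$, the lemma gives $w_m^*u^sw_n=w_{m'}^*u^t w_{n'}$. Substituting $w_m=w_dw_{m'}$ and $w_n=w_dw_{n'}$ from \eqref{it:ToeB0}, I would reorganize the expression as
\[
w_d\bigl(w_{m'}w_{m'}^*\,u^t\,w_{n'}w_{n'}^*\bigr)w_d^*,
\]
isolating a product over the coprime pair $m',n'$.

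The heart of the argument is the coprime identity: when $\gcd(m',n')=1$ and $\alpha'm'-\beta'n'=1$,
\[
w_{m'}w_{m'}^*\,u^t\,w_{n'}w_{n'}^* = u^{m'\alpha't}\,w_{m'n'}w_{m'n'}^*\,u^{*\,n'\beta't}.
\]
To prove this I would split $u^t=u^{\alpha'm't}\,u^{-\beta'n't}$ via the B\'ezout relation and migrate the two factors outward: $w_{m'}^*u^{\alpha'm't}=u^{\alpha't}w_{m'}^*$ by Lemma~\ref{lem:consequences_of_relations}\,\eqref{it:ToeB2-star}, while $u^{-\beta'n't}w_{n'}=w_{n'}u^{-\beta't}$ by \eqref{it:ToeB1}. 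This turns the left-hand side into $(w_{m'}u^{\alpha't}w_{m'}^*)(w_{n'}u^{-\beta't}w_{n'}^*)$; I would then commute the inner pair via $w_{m'}^*w_{n'}=w_{n'}w_{m'}^*$ (Lemma~\ref{lem:consequences_of_relations}\,\eqref{it:ToeB3-mn}), pull the two remaining $u$-powers fully outside using \eqref{it:ToeB1} and Lemma~\ref{lem:consequences_of_relations}\,\eqref{it:ToeB2-star}, and finally collapse $w_{m'}w_{n'}=w_{m'n'}$ and $w_{m'}^*w_{n'}^*=w_{m'n'}^*$ by \eqref{it:ToeB0}.

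Finally I would reinsert the outer $w_d$ and $w_d^*$. Relation \eqref{it:ToeB1} gives $w_d u^{m'\alpha't}=u^{dm'\alpha't}w_d=u^{m\alpha't}w_d$; Lemma~\ref{lem:consequences_of_relations}\,\eqref{it:ToeB2-star} gives $u^{*\,n'\beta't}w_d^*=w_d^*u^{*\,dn'\beta't}=w_d^*u^{*\,n\beta't}$; and \eqref{it:ToeB0} merges $w_dw_{m'n'}=w_{m\vee n}$ (since $dm'n'=m\vee n$) and dually $w_{m'n'}^*w_d^*=w_{m\vee n}^*$. Assembling these and recalling $t=(l-k)/d$ yields exactly $u^{m\alpha'(l-k)/d}\,w_{m\vee n}w_{m\vee n}^*\,u^{*(n\beta'(l-k)/d)}$. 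The main obstacle is the bookkeeping in the coprime identity: keeping the exponents, signs, and the direction of each intertwining straight. A small point worth flagging is that although Lemma~\ref{lem:consequences_of_relations}\,\eqref{it:ToeB2-star} and relation \eqref{it:ToeB1} are recorded for nonnegative powers, they extend to all integer exponents because $u$ is unitary; this is needed here since $\alpha't$ and $\beta't$ may be negative.
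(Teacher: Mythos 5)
Your proof is correct and takes essentially the same route as the paper's: both reduce to the middle word $w_m^*u^{l-k}w_n$ via Lemma~\ref{lem:more_consequences_of_relations} to get the dichotomy, split the surviving power of $u$ using the B\'ezout relation $1=\alpha'm'-\beta'n'$, intertwine powers of $u$ across the isometries via Lemma~\ref{lem:consequences_of_relations}, commute $w_{m'}^*w_{n'}=w_{n'}w_{m'}^*$, and merge isometries by \eqref{it:ToeB0}. The only difference is organizational: you factor $w_m=w_dw_{m'}$, $w_n=w_dw_{n'}$ and package the core computation as a coprime identity conjugated by $w_d$, whereas the paper keeps $w_m$ and $w_n^*$ on the outside and merges $w_mw_{n'}=w_{m\vee n}$ at the very end --- a purely cosmetic variation.
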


\begin{proof}
The equality giving $0$ when $d$ does not divide $l-k$
is immediate from \eqref{eq:simplification}.
So we assume that $d$ divides $l-k$ and compute the left hand-side
of  \eqref{eq:more_relations} as follows
\begin{align*}
w_mw_m^*u^{*k}u^lw_nw_n^*
&=w_mw_{m'}^*u^{(l-k)/d}w_{n'}w_n^* \text{ by }
\eqref{eq:simplification}\\
&=w_mw_{m'}^*u^{\alpha' m'(l-k)/d}u^{*\beta' n'(l-k)/d}w_{n'}w_n^*\\
&=w_mu^{\alpha'(l-k)/d} w_{m'}^*w_{n'}u^{* \beta'(l-k)/d}w_n^* \text{ by }Lemma~\ref{lem:consequences_of_relations}\\
&=w_mu^{\alpha'(l-k)/d} w_{n'}w_{m'}^*u^{*\beta' (l-k)/d}w_n^*
\text{ by }Lemma~\ref{lem:consequences_of_relations}\\
&=u^{m\alpha'(l-k)/d} w_mw_{m^{-1}(m\vee n)}w_{n^{-1}(m\vee n)}^*w_n^*u^{*(n\beta' (l-k)/d)}\\
&=u^{m\alpha'(l-k)/d} w_{m\vee n}w_{m\vee n}^*u^{*(n\beta'(l-k)/d)},
\end{align*}
as needed.
\end{proof}

By \cite[Proposition 2]{Ex}, $\alpha_m\circ L_m$
is a non-degenerate conditional expectation onto $\alpha_m(A)$ for each $m\in \NN^\times$. We prove next
that this expectation is precisely the one constructed in \eqref{eq:cond_exp_A}.

\begin{lemma}\label{lemma:Em_from_alpha_Lm}
\textnormal{(a)} For $m\in \NN^\times$ and $k\in \ZZ$ we have
\begin{equation}\label{eq:alpha_with_L}
(\alpha_m \circ L_m)(Z^k)=E_m(Z^{k})=
\begin{cases}0&\text{ if }k\not\equiv 0(\operatorname{mod}m)\\
Z^k&\text{ if }k\equiv 0(\operatorname{mod}m)
\end{cases}
\end{equation}

\textnormal{(b)} For $m,n\in \NN^\times$ we have $E_m\circ E_n=E_{m\vee n}.$
\end{lemma}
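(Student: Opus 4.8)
The plan is to check both identities on the dense $*$-subalgebra $\lsp\{Z^k\mid k\in\ZZ\}$ of Laurent polynomials in $A=C(\TT)$ and then extend by continuity. This reduction is legitimate because every map involved is a continuous linear operator: $\alpha_m$ is a $*$-homomorphism, $L_m$ is continuous by construction, and each $E_m$ is a (contractive) conditional expectation. Since these polynomials are dense by Stone--Weierstrass, it suffices to prove the stated equalities on each $Z^k$.

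For part (a), the one computation that does all the work is the orthogonality relation for roots of unity. Writing $z=e^{i\theta}$ and letting $w$ range over the $m$-th roots of $z$, I would compute $\sum_{w^m=z}w^k=e^{ik\theta/m}\sum_{j=0}^{m-1}e^{2\pi i kj/m}$, where the geometric sum $\sum_{j=0}^{m-1}e^{2\pi i kj/m}$ equals $m$ when $m\mid k$ and $0$ otherwise. Hence $L_m(Z^k)=Z^{k/m}$ if $m\mid k$, and $L_m(Z^k)=0$ if $m\nmid k$. Applying $\alpha_m$, which sends $Z^{k/m}$ to $Z^k$, then gives $(\alpha_m\circ L_m)(Z^k)=Z^k$ when $m\mid k$ and $0$ otherwise. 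On the other side, $E_m(Z^k)=\frac1m\sum_{j=0}^{m-1}\rho_m(j)(Z^k)=\frac1m\big(\sum_{j=0}^{m-1}e^{2\pi i kj/m}\big)Z^k$, and the very same geometric sum returns the identical value. This proves $(\alpha_m\circ L_m)(Z^k)=E_m(Z^k)$ together with the displayed piecewise formula.

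For part (b), I would again evaluate on $Z^k$. By part (a), $E_n(Z^k)$ equals $Z^k$ when $n\mid k$ and vanishes otherwise, so $(E_m\circ E_n)(Z^k)$ equals $Z^k$ exactly when both $m\mid k$ and $n\mid k$, and vanishes otherwise. Because $m\vee n$ is by definition the least common multiple of $m$ and $n$, the simultaneous divisibility condition ``$m\mid k$ and $n\mid k$'' is equivalent to ``$(m\vee n)\mid k$'', which is precisely the condition under which $E_{m\vee n}(Z^k)=Z^k$. Thus $E_m\circ E_n$ and $E_{m\vee n}$ agree on all generators, hence on $A$.

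I do not expect any real obstacle: the only substantive ingredient is the elementary root-of-unity orthogonality used for part (a), and the single point deserving care is the density-and-continuity reduction that justifies working with the monomials $Z^k$ alone.
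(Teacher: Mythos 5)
Your proof is correct, and part (a) follows essentially the same route as the paper: both reduce to monomials and rest on the root-of-unity orthogonality sum $\frac1m\sum_{j=0}^{m-1}e^{2\pi ijk/m}$, which simultaneously evaluates $L_m(Z^k)$ and $E_m(Z^k)$. For part (b), however, you take a genuinely shorter and more elementary route. You verify $E_m\circ E_n=E_{m\vee n}$ on every monomial $Z^k$ at once, using the equivalence $(m\vee n)\mid k\iff m\mid k\text{ and }n\mid k$ (valid for arbitrary $m,n$ since $m\vee n$ is the least common multiple), and then conclude by linearity, density, and contractivity of the expectations. The paper performs this monomial check only in the coprime case $\gcd(m,n)=1$, where $m\vee n=mn$, and then handles general $m,n$ by an operator-level reduction: writing $d=\gcd(m,n)$, it uses the transfer identity $L_d(a\alpha_d(b))=L_d(a)b$ and the multiplicativity $L_{mr}=L_r\circ L_m$ to factor $(\alpha_m\circ L_m)\circ(\alpha_n\circ L_n)$ through the coprime pair $(m/d,n/d)$. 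For this particular lemma your argument is cleaner---the coprimality restriction in the monomial check is simply unnecessary. What the paper's longer computation buys is independence from the specific generator $Z$: it derives (b) from structural properties of the transfer operators alone, so it would survive in settings where the coefficient algebra is not singly generated, and it rehearses exactly the kind of $\alpha$/$L$ manipulations reused later in Lemma~\ref{lem:Em_compact} and Corollary~\ref{lem_prod_of_compacts}. Your density-and-continuity reduction is properly justified (each $E_m$ is a contraction and $\alpha_m\circ L_m$ is bounded), so there is no gap.
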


\begin{proof} For part (a) note that
$$
(\alpha_m\circ L_m)(Z^k)(z)=L_m(Z^k)(z^m)=\frac 1m\sum_{w^m=z^m}w^k=(\frac 1m\sum_{w^m=1}w^k)z^k;
$$
now, this is $0$ if $1\leq k<m$ and is $z^k$ if $k\equiv 0\,(\operatorname{mod} m)$, and the
claim follows since $C(\TT)$ is generated by $Z$.

To prove (b), assume first that $gcd(m, n)=1$. By (a), $(\alpha_n\circ L_n)(Z^k)$ is zero unless
$k\equiv 0\,(\operatorname{mod} n)$, and further $(\alpha_m\circ L_m)\circ (\alpha_n\circ L_n)(Z^k)$ is
zero unless also $k\equiv 0\,(\operatorname{mod} m)$.
Since $m\vee n=mn$, the terms on both sides of the equality in (b) are zero simultaneously.
If on the other hand $k$ is divisible
by both $m$ and $n$ the two terms  equal $Z^k$. In the general case let $d=gcd(m, n)$ and put $m'=m/d=n^{-1}(m\vee n)$
and $n'=n/d=m'(m\vee n)$. Then by what we have just done
$(\alpha_{m'}\circ L_{m'})\circ(\alpha_{n'}\circ L_{n'})=
\alpha_{m' n'}\circ L_{m' n'}$. Hence, using the transfer property of
$L_d$ and the fact that $L$ is multiplicative on
$\NN^\times$ we compute that
\begin{align*}
(\alpha_m\circ L_m)\circ(\alpha_n\circ L_n)(f)
&=(\alpha_m\circ L_{m'})\circ L_d(\alpha_d(\alpha_{n'}\circ L_n(f)))\\
&=(\alpha_m\circ L_{m'})\circ(\alpha_{n'}\circ L_n(f))\\
&=\alpha_d(\alpha_{m'n'}\circ L_{m'n'}(L_d(f)))\\
&=(\alpha_{m\vee n}\circ L_{m\vee n})(f)
\end{align*}
for all $f\in A$, as claimed.
\end{proof}

\begin{lemma}\label{lem:Em_compact} Given $m, n\in \NN^\times$ let  $d=gcd(m,n)$. Suppose $f,g\in A$
are such that $E_d(f^*g)=\alpha_{m}(f_0)\alpha_n(g_0)$ for some $f_0, g_0\in A$. Then
for every $h\in A$ we have
$$
E_m(f^*gE_n(h))=\alpha_m(f_0)E_{m\vee n}(\alpha_n(g_0)h).
$$
\end{lemma}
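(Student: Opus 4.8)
The plan is to treat each $E_m$ as what Lemma~\ref{lemma:Em_from_alpha_Lm} shows it to be, namely the conditional expectation of $A=C(\TT)$ onto the fixed-point algebra $\alpha_m(A)=A^{\rho_m}$, and to drive the whole argument from two elementary features of such averaging maps. First, each $E_m$ is a bimodule map over its range: $E_m(ayb)=a\,E_m(y)\,b$ whenever $a,b\in\alpha_m(A)$, since $\rho_m$ fixes $a$ and $b$ and acts by automorphisms. Second, because $d=\gcd(m,n)$ divides both $m$ and $n$, we have the inclusions $\alpha_m(A),\alpha_n(A)\subseteq\alpha_d(A)$, and this nesting of ranges yields $E_m\circ E_d=E_m$ (immediate on the monomials $Z^k$, since $m\mid k$ forces $d\mid k$). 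The multiplicativity $E_m\circ E_n=E_{m\vee n}$ is already available from Lemma~\ref{lemma:Em_from_alpha_Lm}(b).

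With these in hand the verification is a short chain of rewritings. Starting from $E_m(f^*g\,E_n(h))$, I would first insert $E_d$ using $E_m=E_m\circ E_d$. Since $d\mid n$, the element $E_n(h)$ lies in $\alpha_n(A)\subseteq\alpha_d(A)=A^{\rho_d}$, so it is fixed by $\rho_d$ and may be pulled out from under $E_d$ by the bimodule property, leaving $E_m(E_d(f^*g)\,E_n(h))$. At this point the hypothesis $E_d(f^*g)=\alpha_m(f_0)\alpha_n(g_0)$ applies. The factor $\alpha_m(f_0)$ lies in $\alpha_m(A)$, the range of $E_m$, and pulls out on the left; the remaining factor $\alpha_n(g_0)$ lies in $\alpha_n(A)$, the range of $E_n$, so $\alpha_n(g_0)\,E_n(h)=E_n(\alpha_n(g_0)h)$ by the bimodule property of $E_n$. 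One is then left with $\alpha_m(f_0)\,E_m(E_n(\alpha_n(g_0)h))$, and $E_m\circ E_n=E_{m\vee n}$ collapses this to exactly $\alpha_m(f_0)\,E_{m\vee n}(\alpha_n(g_0)h)$.

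The only substantive step --- and the one I would be most careful to justify --- is the opening move: rewriting $E_m$ as $E_m\circ E_d$ and then legitimately extracting $E_n(h)$ from inside $E_d$. This is precisely where the divisibility $d\mid m$ and $d\mid n$ is used, and it is what makes the hypothesis on $E_d(f^*g)$ usable in the first place; everything afterward is a mechanical application of the bimodule property together with $E_m\circ E_n=E_{m\vee n}$. Since all the maps involved are linear and norm-continuous and $A=C(\TT)$ is generated by $Z$, the auxiliary identity $E_m\circ E_d=E_m$ (and any other relation among the $E_\bullet$) may safely be checked on the monomials $Z^k$ and extended by continuity.
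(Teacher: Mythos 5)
Your proof is correct and follows essentially the same route as the paper's: insert $E_d$ via $E_m=E_m\circ E_d$, use the nesting $\alpha_n(A)\subseteq\alpha_d(A)$ to pull $E_n(h)$ out of $E_d$, substitute the hypothesis, and finish with the bimodule (conditional expectation) properties together with $E_m\circ E_n=E_{m\vee n}$. The only difference is cosmetic: you make the bimodule property of the averaging expectations explicit, where the paper uses it tacitly (and, incidentally, your reading $E_d\circ E_n=E_n$ is the correct form of what the paper mis-states as $E_d\circ E_n=E_d$).
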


\begin{proof} Indeed, note first that $E_m=E_m\circ E_d$ and $E_d\circ E_n=E_d$ by Lemma~\ref{lemma:Em_from_alpha_Lm} (b).
Thus we
compute
\begin{align}
E_m(f^*gE_n(h))
&=E_m(E_d(f^*g(E_d\circ E_n(h)))=E_m(E_d(f^*g)E_n(h))\label{E-dmn}    \\
&=E_m(\alpha_m(f_0)\alpha_n(g_0)E_n(h))=E_m(\alpha_m(f_0)E_n(\alpha_n(g_0)h))\notag\\
&=\alpha_m(f_0)(E_m\circ E_n(\alpha_n(g_0)h)),\notag
\end{align}
which by Lemma~\ref{lemma:Em_from_alpha_Lm} equals $\alpha_m(f_0)E_{m\vee n}(\alpha_n(g_0)h)$, as claimed.
\end{proof}

Using Lemma~\ref{lem:Em_compact} we will write down explicitly elements
 in $\Kk(X_{m\vee n})$ obtained as $i_m^{m\vee n}(S)i_n^{m\vee n}(T)$ for $S\in \Kk(X_m)$
 and $T\in \Kk(X_n)$.

\begin{cor}\label{lem_prod_of_compacts}
For each $m, n\in \NN^\times$ let $d=gcd(m,n)$ and choose integers
$\alpha'$ and $\beta'$ such that $1=\alpha' m/d -\beta' n/d$.
  Then
\begin{equation}\label{eq:prod_compacts}
i_m^{m\vee n}(\theta_{\31_m, Z^k \31_m})i_n^{m\vee n}(\theta_{Z^l\31_n, \31_n})
=\begin{cases}0&\text{ if }k\not\equiv l(\operatorname{mod}d)\\
\theta_{Z^{m\alpha'(l-k)/d}\31_{m\vee n},Z^{n\beta'(l-k)/d}\31_{m\vee n}}&\text{ if }k\equiv l(\operatorname{mod}d)\end{cases}
\end{equation}
\end{cor}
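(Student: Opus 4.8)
The plan is to compute each of the two embedded compact operators explicitly as an operator on $X_{m\vee n}$, to compose them, and then to recognise the result via Lemma~\ref{lem:Em_compact}. Write $N:=m\vee n$, $r:=N/m=n/d$ and $s:=N/n=m/d$, so that by definition $i_m^{N}$ is built from $F^{m,r}$ and $i_n^{N}$ from $F^{n,s}$. First I would unwind $i_m^{N}(S)=F^{m,r}(S\otimes\id_r)(F^{m,r})^*$ on a generic vector $\zeta\31_N$: applying \eqref{eq:Fadjoint}, the definition of $\theta$, the inner product \eqref{eq:inner-product-m}, the right action \eqref{eq:right-action-m}, and the identity $\alpha_m\circ L_m=E_m$ of Lemma~\ref{lemma:Em_from_alpha_Lm}, a direct calculation should give
$$
i_m^{N}(\theta_{\31_m,Z^k\31_m})(\zeta\31_N)=E_m(Z^{-k}\zeta)\31_N .
$$
The analogous computation for the second factor, now through $F^{n,s}$, should produce $i_n^{N}(\theta_{Z^l\31_n,\31_n})(\zeta\31_N)=Z^lE_n(\zeta)\31_N$. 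Composing (the left factor acting last) then collapses everything to the single multiplier $\zeta\31_N\mapsto E_m(Z^{l-k}E_n(\zeta))\31_N$.

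At this point the expression is exactly of the form treated in Lemma~\ref{lem:Em_compact}, with $f^*g=Z^{l-k}$ and $h=\zeta$; I would take $f=1$ and $g=Z^{l-k}$. Checking its hypothesis uses $\alpha_m(Z^p)=Z^{mp}$ together with the B\'ezout relation $d=\alpha'm-\beta'n$: when $d\mid(l-k)$ this gives $l-k=m\cdot\tfrac{\alpha'(l-k)}{d}-n\cdot\tfrac{\beta'(l-k)}{d}$, so one may take $f_0=Z^{\alpha'(l-k)/d}$ and $g_0=Z^{-\beta'(l-k)/d}$, and then $\alpha_m(f_0)\alpha_n(g_0)=Z^{l-k}=E_d(Z^{l-k})$. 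Lemma~\ref{lem:Em_compact} now rewrites the composite as $\zeta\31_N\mapsto Z^{m\alpha'(l-k)/d}\,E_N\!\bigl(Z^{-n\beta'(l-k)/d}\zeta\bigr)\31_N$. Finally, the same string of definitions used above, now with $\alpha_N\circ L_N=E_N$, shows $\theta_{Z^a\31_N,Z^b\31_N}(\zeta\31_N)=Z^aE_N(Z^{-b}\zeta)\31_N$, and matching $a=m\alpha'(l-k)/d$, $b=n\beta'(l-k)/d$ identifies the composite with the claimed rank-one operator. The vanishing case is automatic: if $k\not\equiv l\pmod d$ then $E_d(Z^{l-k})=0$, so $f_0=0$ works and (already from the first line of the proof of Lemma~\ref{lem:Em_compact}) the composite is $0$.

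The main obstacle is purely the bookkeeping of the first paragraph: I must thread the two distinct factorisations $N=mr=ns$ correctly through the definition of $i_\bullet^{N}$, track which fibre each $Z^j$ and each $\31$ sits in, and let $E_m$, $E_n$, $E_N$ land in the right places. Once the composite is reduced to left multiplication on $\31_N$ by $\zeta\mapsto E_m(Z^{l-k}E_n(\zeta))$, the rest is governed entirely by Lemma~\ref{lem:Em_compact} and the arithmetic of the B\'ezout coefficients. One final point to note is that the displayed identities are first verified on the vectors $\zeta\31_N$ with $\zeta\in A$; since these are dense in $X_N$ and all operators involved are bounded and $A$-linear, the operator equalities extend to all of $X_{m\vee n}$.
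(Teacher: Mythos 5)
Your proposal is correct and follows essentially the same route as the paper's proof: both unwind $i_m^{m\vee n}$ and $i_n^{m\vee n}$ via \eqref{eq:imr} and \eqref{eq:Fadjoint} to reduce the composite to the multiplier $\zeta\31_{m\vee n}\mapsto E_m(Z^{l-k}E_n(\zeta))\31_{m\vee n}$, then apply the B\'ezout factorization $E_d(Z^{l-k})=\alpha_m(Z^{\alpha'(l-k)/d})\alpha_n(Z^{*\beta'(l-k)/d})$ together with Lemma~\ref{lem:Em_compact} to identify the rank-one operator, with the vanishing case handled by $E_d(Z^{l-k})=0$. Your closing density remark is superfluous (every element of $X_{m\vee n}$ already has the form $\zeta\31_{m\vee n}$ with $\zeta\in A$), but this does not affect correctness.
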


\begin{proof}  Let $\zeta,\xi,\eta \in A$.  We apply \eqref{eq:imr} to see that
\begin{align}
i_n^{m\vee n}(\theta_{\xi\31_n, \eta \31_n})(\zeta \31_{m\vee n})
&=F^{n, n^{-1}(m\vee n)}(\theta_{\xi\31_n, \eta \31_n}\otimes\id)(F^{n, n^{-1}(m\vee n)})^*(\zeta \31_{m\vee n})\notag \\
&=F^{n, n^{-1}(m\vee n)}(\theta_{\xi\31_n, \eta \31_n}(\zeta\31_n)\otimes_A \31_{n^{-1}(m\vee n)})\notag\\
&=F^{n, n^{-1}(m\vee n)}(\xi(\alpha_n\circ L_n(\eta^*\zeta)\31_n)) \otimes_A \31_{n^{-1}(m\vee n)})\notag\\
&=(\xi(\alpha_n\circ L_n(\eta^*\zeta)))\31_{m\vee n}.\label{eq:n_compact}
\end{align}
We now use \eqref{eq:n_compact} to transform the left-hand side of \eqref{eq:prod_compacts} applied to $\zeta \31_{m\vee n}$ into
$$
\bigl(\alpha_m\circ L_m(Z^{*k}Z^l(\alpha_n\circ L_n)(\zeta))\bigr)\31_{m\vee n}=(E_m(Z^{*k}Z^l(E_n(\zeta))))\31_{m\vee n}.
$$
As in the proof of \eqref{E-dmn}, $E_m(Z^{*k}Z^l(E_n(\zeta)))=E_m(E_d(Z^{l-k})E_n(\zeta))$. Lemma~\ref{lemma:Em_from_alpha_Lm} (a)
says that the last expression is equal to $0$ unless $l-k$ is divisible by $d$. Assuming that $l-k$ is divisible by $d$, we can write
$$E_d(Z^{l-k})=E_d(\alpha_d(Z^{(l-k)/d}))=\alpha_m(Z^{\alpha' (l-k)/d})\alpha_n(Z^{*\beta'(l-k)/d}).
$$
Then Lemma~\ref{lem:Em_compact} implies that $E_m(Z^{*k}Z^l(E_n(\zeta)))=Z^{m\alpha'(l-k)/d}(E_{m\vee n}(Z^{*n\beta'(l-k)/d}\zeta))$.
In all we have $i_m^{m\vee n}(\theta_{\31_m, Z^k\31_n})i_n^{m\vee n}
(\theta_{Z^l\31_n, \31_n})(\zeta \31_{m\vee n})=0$ when $k\not\equiv l\,(\operatorname{mod} d)$, and otherwise
$$
i_m^{m\vee n}(\theta_{\31_m, Z^k\31_n})i_n^{m\vee n}
(\theta_{Z^l\31_n, \31_n})(\zeta \31_{m\vee n})=\theta_{Z^{m\alpha'(l-k)/d}\31_{m\vee n},
Z^{n\beta'(l-k)/d}\31_{m\vee n}}$$
as claimed.
\end{proof}

\begin{rmk}\label{rmk:arbitrary_products_compacts} Note that the
same computations show that when $i, j, k, l$ are
integers then
$$
i_m^{m\vee n}(\theta_{Z^i\31_m, Z^k\31_m})i_n^{m\vee n}(\theta_{Z^l\31_n, Z^j\31_n})=\theta_{Z^{i+m\alpha'(l-k)/d}\31_{m\vee n}, Z^{(j+\beta'(l-k)/d)}\31_{m\vee n}}
$$
precisely when $k\equiv l(\operatorname{mod} (d))$, and is the zero element
in $\Kk(X_{A, m\vee n})$ otherwise.
\end{rmk}

\medskip\noindent
Now we are in a position to complete the proof of our main result.

\medskip
\begin{proof}[Proof of Theorem~\ref{thm:presentation_T(XA)}] Since
$i_1$ is a $*$-homomorphism, $u$ is unitary. In view of
the fact that $\langle \31_m, \31_m \rangle_m = L_m(1)=1$, the Toeplitz relation
$i_1(\langle \31_m, \31_m\rangle_m)=i_m(\31_m)^*i_m(\31_m)$ shows
that each $w_m$ is an isometry.

Ad \eqref{it:ToeB0}. This follows immediately from the identity $\31_{mn}=\31_m\31_n$, which in turn is a consequence
of \eqref{eq:product}.

Ad \eqref{it:ToeB1}. This follows from the Toeplitz relation $i_m(\31_m\cdot Z)=i_m(\31_m)i_1(Z)$,
since $\31_m\cdot Z=\alpha_m(Z)=Z^m\cdot \31_m$ in $X_m$.

Ad \eqref{it:ToeB2}. Let $p$ and $q$ be distinct primes. Then $p\vee q=pq$.
By Corollary~\ref{lem_prod_of_compacts}, Nica covariance of $\imath$ for the pair
$\theta_{\31_p,\31_p}$ and $\theta_{\31_q,
\31_q}$ is the identity $\imath^{(p)}(\theta_{\31_p, \31_p})\imath^{(q)}(
\theta_{\31_q, \31_q})=\imath^{(pq)}(\theta_{\31_{pq}, \31_{pq}})$. The left hand
side is $w_pw_p^*w_qw_q^*$. The right hand side is $w_{pq}w_{pq}^*$ or, by
commutativity of $\NN^\times$ and relation \eqref{it:ToeB0}, $w_pw_qw_p^*w_q^*$.
Since $w_p, w_q$ are isometries, \eqref{it:ToeB2} follows.

Ad \eqref{it:ToeB3}. Let $p$ be a prime. For $1\leq k<p$ and $\xi\in X_{A,p}$
we have
$$ \theta_{\31_p,\31_p}\theta_{Z^k\31_p,\31_p}(\xi)=\31_p\cdot
    \langle \31_p, Z^k \31_p \rangle_p \langle \31_p,\xi \rangle_p =
(\alpha_p\circ L_p(Z^k))(\alpha_p\circ L_p (\xi))\31_p.
$$
However, this last term is zero by Lemma~\ref{lemma:Em_from_alpha_Lm} (a).
Consequently, $w_p w_p^*u^k w_p w_p^*=i^{(p)}
(\theta_{\31_p,\31_p}\theta_{Z^k\31_p,\31_p})=0$, and relation \eqref{it:ToeB3}
follows.

It remains to prove that $\Tc(X)$ is universal for the relations \eqref{it:ToeB0}--\eqref{it:ToeB3}. It is
clear that $\Tc(X)$ is generated by $u$ and $w_m$, $m\in\NN^\times$. Thus
there is a homomorphism $\rho$
from the universal $C^*$-algebra of the relations \eqref{it:ToeB0}--\eqref{it:ToeB3} onto $\Tc(X)$. We will show
that given a unitary $U$  and isometries
$W_m$, $m\in\NN^\times$ which satisfy the relations
\eqref{it:ToeB0}--\eqref{it:ToeB3}, there
exists a Nica covariant Toeplitz representation $\psi$ of $X$ such that
$\psi_1(Z)=U$ and
$\psi_m(\31_m)=W_m$ for all $m$. Indeed, it is obvious that there exists a $C^*$-algebra
homomorphism $\psi_1$ mapping $Z$ to $U$. Now for $f\in A$ and $m\in \NN^\times$ we define $\psi_m(f\31_m):=
\psi_1(f)W_m$.

To show that $\psi$ is Toeplitz covariant we must verify that
\begin{enumerate}
\item\label{it:UniToeB1} $\psi_m(f\31_m)\psi_n(g\31_n)=\psi_{mn}((f\31_m)\cdot(g\31_n))$,
\item\label{it:UniToeB2} $\psi_m((f\31_m)g)=\psi_m(f\31_m)\psi_1(g)$,
\item\label{it:UniToeB3} $\psi_1(\langle f\31_m, g\31_m \rangle_m) = \psi_m(f\31_m)^*\psi_m(g\31_m)$ for $f,g\in A$.
\end{enumerate}
By continuity (and linearity), it suffices to verify these identities with
$f,g$ replaced by integral powers of $Z$. For example, if $f=Z^k$ and $g=Z^l$ the claim \eqref{it:UniToeB1}
is equivalent to $U^k W_mU^lW_n=U^kU^{ml}W_{mn}$; by \eqref{it:ToeB0} and \eqref{it:ToeB1}, the right-hand
side is $U^kU^{ml}W_mW_n=U^kW_mU^lW_n$.
The claim \eqref{it:UniToeB2} for $f=Z^k$ and
$g=Z^l$ follows from \eqref{it:ToeB1}:
\begin{align*}
\psi_m((Z^k\31_m)\cdot Z^l)
&= \psi_m(Z^k\31_mZ^{lm})=U^{k+lm}W_m\\
&=(U^kW_m)U^l=\psi_m(Z^k\31_m)\psi_1(Z^l).
\end{align*}
To prove equation \eqref{it:UniToeB3} we first compute that
\begin{equation}\label{eq:inner_prod_generators}
\langle Z^k\31_m,Z^l\31_m\rangle_m=L_m(Z^{*k}Z^l)=\begin{cases}
Z^{(l-k)/m}&\text{ if }l\equiv k(\operatorname{mod} m)\\
0&\text{ otherwise}.
\end{cases}
\end{equation}
Thus $\psi_1(\langle Z^k\31_m,Z^l\31_m\rangle_m)$ is $U^{(l-k)/m}$ when
$l\equiv k(\operatorname{mod} m)$ and is
otherwise $0$. On the other hand, $\psi_m(Z^k\31_m)^* \psi_m(Z^l\31_m)=
W_m^*U^{l-k}W_m$, which by \eqref{it:ToeB3}
is zero unless $k\equiv l(\operatorname{mod} m)$, in which case by
\eqref{it:ToeB1} it turns into $U^{(l-k)/m}$, and \eqref{it:UniToeB3}
follows. Note that we have established that $\psi$ is a Toeplitz representation of $X$
only using the relations \eqref{it:ToeB0}, \eqref{it:ToeB1} and \eqref{it:ToeB3}.

It remains to establish that $\psi$ is Nica covariant, since then  the universal property
of $\Tc(X)$ will provide an
inverse for $\rho$. Here the relation \eqref{it:ToeB2} makes its entrance. Let $S\in \Kk(X_{m})$ and
$T\in \Kk(X_{n})$ for $m,n\in \NN^\times$. Since for each $m$ the map
$\psi^{(m)}$ is a homomorphism and  $X_{m}$ is spanned by elements of
the form $Z^k\31_m$, it suffices to verify Nica-covariance for elements of the
form  $S=\theta_{Z^i\31_m, Z^k\31_m}$ and
$T=\theta_{Z^l\31_n, Z^j\31_n}$ for $i,j,k,l\in \ZZ$. If $i=j=0$ then Nica covariance amounts, by Corollary~\ref{lem_prod_of_compacts}, precisely to
\eqref{eq:more_relations} for the elements $U$ and $W_m$.
For arbitrary $i,j$ Nica covariance can be reduced
to this case by applying Remark~\ref{rmk:arbitrary_products_compacts}.
\end{proof}

\begin{rmk}\label{wrelations}

Relations \eqref{it:ToeB0}--\eqref{it:ToeB3} from Theorem~\ref{thm:presentation_T(XA)}
are almost identical with
Laca and Raeburn's relations (T1)--(T5) for $\Tt(\NN\rtimes\NN^\times)$
(see Section
\ref{section:Prelim} above). The only difference between the relations for
$\{u, w_p\mid p\in \Pp\}$  and $\{s, v_p\mid p\in \Pp\}$
lies in the equation $uu^*=1$ making  $u$ a unitary. Indeed, this fact and
\eqref{it:ToeB1} imply
that $w_p=w_puu^*=u^pw_p u^*$, which upon multiplication with $u^*$ from
the left becomes $u^*w_p=u^{p-1}w_pu^*$, i.e. (T4).
\end{rmk}

\begin{cor}\label{LRontoHLS}
There exists a surjective $C^*$-algebra homomorphism from $\Tt(\NN\rtimes\NN^\times)$
onto $\Tc(X)$ sending $s$ to $u$ and $v_p$ to $w_p$ for all $p\in\Pp$.
\end{cor}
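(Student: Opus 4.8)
The plan is to read off the corollary from the universal property of $\Tt(\NN\rtimes\NN^\times)$ together with the presentation of $\Tc(X)$ established in Theorem~\ref{thm:presentation_T(XA)}. Since by \cite{Lac-Rae2} the algebra $\Tt(\NN\rtimes\NN^\times)$ is universal for isometries $s$ and $v_p$ ($p\in\Pp$) subject to \eqref{it:LR-1}--\eqref{it:LR-5}, it suffices to exhibit isometries in $\Tc(X)$ playing the roles of $s$ and $v_p$ and satisfying these five relations; the homomorphism then follows at once. The natural candidates are $u$ and the $w_p$, which are isometries by Theorem~\ref{thm:presentation_T(XA)} ($u$ being in fact a unitary).

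First I would verify \eqref{it:LR-1}--\eqref{it:LR-5} for $u$ and $w_p$, matching them against \eqref{it:ToeB0}--\eqref{it:ToeB3}. Written in terms of these elements, \eqref{it:LR-1} is the $m=p$ case of \eqref{it:ToeB1}, relation \eqref{it:LR-3} is \eqref{it:ToeB2}, and \eqref{it:LR-5} is \eqref{it:ToeB3}. For \eqref{it:LR-2} one invokes \eqref{it:ToeB0} and the commutativity of $\NN^\times$, giving $w_pw_q=w_{pq}=w_{qp}=w_qw_p$. The only relation needing a short computation is \eqref{it:LR-4}, which is exactly the argument of Remark~\ref{wrelations}: because $u$ is a unitary, \eqref{it:ToeB1} yields $w_p=u^pw_pu^*$, and left multiplication by $u^*$ gives $u^*w_p=u^{p-1}w_pu^*$.

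With \eqref{it:LR-1}--\eqref{it:LR-5} in hand, the universal property of $\Tt(\NN\rtimes\NN^\times)$ produces a homomorphism $\Tt(\NN\rtimes\NN^\times)\to\Tc(X)$ sending $s\mapsto u$ and $v_p\mapsto w_p$. For surjectivity I would appeal again to Theorem~\ref{thm:presentation_T(XA)}, by which $\Tc(X)$ is generated by $u$ and all the $w_m$; relation \eqref{it:ToeB0} together with unique prime factorization expresses each $w_m$ as a finite product of the $w_p$, so the image already contains a generating set and the map is onto.

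I do not expect a substantive obstacle: the corollary is essentially a bookkeeping consequence of Theorem~\ref{thm:presentation_T(XA)} and Remark~\ref{wrelations}. The one point worth keeping straight is the direction of the map. The two presentations agree except that $u$ is required to be unitary whereas $s$ is only an isometry; since being unitary is a strengthening, the pair $u,w_p$ fulfils strictly more than is asked of $s,v_p$, and this is precisely why the arrow runs from $\Tt(\NN\rtimes\NN^\times)$ into $\Tc(X)$.
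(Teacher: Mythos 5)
Your proposal is correct and takes essentially the same route as the paper: the paper deduces the corollary from Theorem~\ref{thm:presentation_T(XA)} together with Remark~\ref{wrelations}, which performs exactly your matching of \eqref{it:ToeB0}--\eqref{it:ToeB3} against \eqref{it:LR-1}--\eqref{it:LR-5}, including the same derivation of \eqref{it:LR-4} from unitarity of $u$ and \eqref{it:ToeB1}, followed by the universal property of $\Tt(\NN\rtimes\NN^\times)$ and surjectivity via generation of $\Tc(X)$ by $u$ and the $w_m$.
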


The main difficulty in proving Theorem \ref{thm:presentation_T(XA)} consists of dealing with the
Nica covariance. A much simpler argument, already contained in the proof of Theorem
\ref{thm:presentation_T(XA)}, yields the following.

\begin{prop}\label{xbtoeplitz}
The Toeplitz algebra $\Tt(X)$ is the universal $C^*$-algebra generated by a unitary $u$ and isometries
$w_m$, $m\in \NN^\times$, subject to the relations \eqref{it:ToeB0}, \eqref{it:ToeB1} and \eqref{it:ToeB3}.
\end{prop}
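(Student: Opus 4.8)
The plan is to retrace the proof of Theorem~\ref{thm:presentation_T(XA)}, discarding every step that invoked Nica covariance (equivalently, relation \eqref{it:ToeB2}). Let $\tau$ denote the universal Toeplitz representation of $X$ into $\Tt(X)$ and write $\tau_m=\tau\vert_{X_m}$. Setting $u:=\tau_1(Z)$ and $w_m:=\tau_m(\31_m)$, the verifications that $u$ is unitary, that each $w_m$ is an isometry, and that the relations \eqref{it:ToeB0}, \eqref{it:ToeB1} and \eqref{it:ToeB3} hold are word for word the corresponding parts of the proof of Theorem~\ref{thm:presentation_T(XA)}. Indeed, \eqref{it:ToeB0} comes from $\31_{mn}=\31_m\31_n$, relation \eqref{it:ToeB1} from $\31_m\cdot Z=Z^m\cdot\31_m$ together with the Toeplitz identity, and \eqref{it:ToeB3} from the vanishing $\theta_{\31_p,\31_p}\theta_{Z^k\31_p,\31_p}=0$ supplied by Lemma~\ref{lemma:Em_from_alpha_Lm}(a): this yields $w_pw_p^*u^kw_pw_p^*=\tau^{(p)}(\theta_{\31_p,\31_p}\theta_{Z^k\31_p,\31_p})=0$, and multiplying by $w_p^*$ on the left and $w_p$ on the right gives $w_p^*u^kw_p=0$. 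None of these steps uses \eqref{it:ToeB2}.

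Next I would show that $u$ and the $w_m$ generate $\Tt(X)$. For $f\in A$ one has $f\31_m=\phi_m(f)(\31_m)$, so the Toeplitz relation gives $\tau_m(f\31_m)=\tau_1(f)\tau_m(\31_m)=\tau_1(f)w_m$; since $A=C(\TT)$ is generated by $Z$, the element $\tau_1(f)$ lies in $C^*(u)$, whence $\tau_m(X_m)\subseteq C^*(\{u,w_m\})$. As $\Tt(X)$ is generated by $\tau(X)$, it is generated by $\{u\}\cup\{w_m\mid m\in\NN^\times\}$. Consequently there is a surjective homomorphism $\rho$ from the universal $C^*$-algebra $\Bb$ of the relations \eqref{it:ToeB0}, \eqref{it:ToeB1}, \eqref{it:ToeB3} onto $\Tt(X)$ carrying the universal generators to $u$ and the $w_m$.

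For the reverse direction I would run the construction from the proof of Theorem~\ref{thm:presentation_T(XA)}: given a unitary $U$ and isometries $W_m$ in some $C^*$-algebra $C$ satisfying \eqref{it:ToeB0}, \eqref{it:ToeB1}, \eqref{it:ToeB3}, let $\psi_1$ be the homomorphism sending $Z$ to $U$ and put $\psi_m(f\31_m):=\psi_1(f)W_m$. The verification that $\psi$ satisfies the Toeplitz axioms was carried out there using exactly these three relations---the main proof explicitly records that Toeplitz covariance of $\psi$ needs only \eqref{it:ToeB0}, \eqref{it:ToeB1} and \eqref{it:ToeB3}. The universal property of $\Tt(X)$ then yields a homomorphism $\Tt(X)\to C$ with $u\mapsto U$ and $w_m\mapsto W_m$; applied to the canonical generators of $\Bb$ this furnishes an inverse to $\rho$, so $\rho$ is an isomorphism.

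The point worth stressing is that there is essentially no new obstacle to overcome: the entire difficulty of the Nica-covariant case lay in matching Nica covariance with relation \eqref{it:ToeB2}, and here that relation is absent precisely because $\Tt(X)$ imposes no covariance beyond the Toeplitz axioms. The only genuine care needed is, first, to confirm that every sub-argument borrowed from the proof of Theorem~\ref{thm:presentation_T(XA)} really avoids \eqref{it:ToeB2}---which the main proof already flags---and, second, to nail down the generation claim so that the surjectivity of $\rho$ is unambiguous.
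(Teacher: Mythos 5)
Your proposal is correct and is essentially the paper's own argument: the paper proves Proposition~\ref{xbtoeplitz} precisely by remarking that the required argument is ``already contained in the proof of Theorem~\ref{thm:presentation_T(XA)}'' once the Nica-covariance step (relation \eqref{it:ToeB2}) is discarded, which is exactly what you carry out, including the paper's explicit flag that Toeplitz covariance of $\psi$ uses only \eqref{it:ToeB0}, \eqref{it:ToeB1} and \eqref{it:ToeB3}. Your added care about the generation claim (via $\tau_m(f\31_m)=\tau_1(f)w_m$) is a correct and welcome filling-in of a detail the paper leaves implicit.
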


In closing of this note, we identify the Cuntz-Pimsner type algebras
associated to the
product system $X$. Since the left action of $A$ in each fiber is by compact
operators, the Cuntz-Pimsner algebra $\Oo(X)$ is, by the definition in
\cite[Proposition 2.9]{Fow2},
equal to the quotient of $\Tt(X)$ by
the ideal generated by all differences $i(f)-i^{(m)}(\phi_m(f))$ for
$f\in A$ and $m\in \NN^\times$. However, the theory of \cite{Fow2} does not
guarantee that $\Oo(X)$ is non-zero. The remedy is to consider the Cuntz-Nica-Pimsner algebra
$\NO(X)$ constructed in \cite{SY}; this algebra is by
definition a quotient of $\Tc(X)$, and due to injectivity of the
left action $\phi_m$ for every $m\in \NN^\times$  contains a copy of the
coefficient algebra $A$, see \cite[Theorem 4.1]{SY}. Moreover, since each
pair of elements in $\NN^\times$ has a least upper bound, and since the left action
$\phi_m$
takes values in the generalized compact operators
$\Kk(X_{m})$, the results of \cite[\S 5.1]{SY} show that $\NO(X)$ and
Fowler's  $\Oo(X)$ are isomorphic.

Denote the images in $\NO(X)$ of generators $u$, $w_m$
under the quotient map from $\Tc(X)$ by the same symbols. Then we have
the following characterization of the Cuntz-Nica-Pimsner algebra of $X$, see also \cite[Theorem 5.2]{BanHLR}.

\begin{prop}\label{prop:cuntz-pimsner-xb}
In addition to the relations from Theorem~\ref{thm:presentation_T(XA)},
the generators of $\NO(X)$ satisfy
\begin{enumerate}\renewcommand{\theenumi}{B\arabic{enumi}}
\setcounter{enumi}{4}
\item\label{it:ToeB4} $\sum_{k=0}^{m-1} u^k w_m w_m^* u^{-k} =1$ for
all $m\in\NN^\times$.
\end{enumerate}
Consequently, $\NO(X)$ is isomorphic to the $C^*$-algebra $\Qq_\NN$ defined
by Cuntz in \cite{Cun1}.
\end{prop}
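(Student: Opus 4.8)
The plan is to establish the new relation \eqref{it:ToeB4} first, and then to obtain the isomorphism with $\Qq_\NN$ almost for free from the simplicity of $\Qq_\NN$, rather than by verifying the Laca--Raeburn-type relations \eqref{it:ToeB2}--\eqref{it:ToeB3} inside $\Qq_\NN$.

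For \eqref{it:ToeB4} the essential input is that, under the identification $\NO(X)\cong\Oo(X)$ recalled above, the images of the generators satisfy Cuntz--Pimsner covariance: since $\Oo(X)$ is the quotient of $\Tt(X)$ by the differences $i_1(f)-i^{(m)}(\phi_m(f))$, we have $i^{(m)}(\phi_m(f))=i_1(f)$ in $\NO(X)$ for all $f\in A$ and all $m$, where Lemma~\ref{lem:leftact_compacts}(1) guarantees $\phi_m(A)\subseteq\Kk(X_m)$ so that $i^{(m)}(\phi_m(f))$ is defined for every $f$. Taking $f=1$ reduces \eqref{it:ToeB4} to writing $I_m=\phi_m(1)$ as an explicit finite sum of rank-one operators. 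I would verify, by evaluating on the spanning vectors $Z^n\31_m$ and using \eqref{eq:inner_prod_generators}, the identity $\sum_{k=0}^{m-1}\theta_{Z^k\31_m,\,Z^k\31_m}=I_m$: for each $n$ exactly one index $k\in\{0,\dots,m-1\}$ satisfies $k\equiv n\ (\operatorname{mod} m)$, and the corresponding summand returns $Z^n\31_m$ while all the others vanish. Applying the homomorphism $i^{(m)}$ and using the Toeplitz identity $i_m(Z^k\31_m)=i_1(Z^k)i_m(\31_m)=u^kw_m$, the left-hand side becomes $\sum_{k=0}^{m-1}u^kw_mw_m^*u^{-k}$, so this sum equals $i^{(m)}(I_m)=i_1(1)=1$, which is precisely \eqref{it:ToeB4}.

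Once \eqref{it:ToeB4} is available, the generators $u,w_m$ of $\NO(X)$ satisfy exactly Cuntz's relations \eqref{it:Qn-1}--\eqref{it:Qn-3} (namely \eqref{it:ToeB0}, \eqref{it:ToeB1} and \eqref{it:ToeB4}), so the universal property of $\Qq_\NN$ furnishes a $*$-homomorphism $\pi\colon\Qq_\NN\to\NO(X)$ with $\pi(u)=u$ and $\pi(s_m)=w_m$. Since $u$ and the $w_m$ generate $\NO(X)$, the map $\pi$ is surjective. Finally, $\NO(X)$ is nonzero because it contains a copy of the coefficient algebra $A=C(\TT)$ (by injectivity of each $\phi_m$ and \cite[Theorem~4.1]{SY}), so $\pi$ is a nonzero homomorphism; as Cuntz proved that $\Qq_\NN$ is simple, $\pi$ must be injective, hence an isomorphism. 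I expect the only genuine computation to be the rank-one identity underlying \eqref{it:ToeB4}; the isomorphism then follows formally, which is why this route is preferable to verifying \eqref{it:ToeB2}--\eqref{it:ToeB3} directly in $\Qq_\NN$.
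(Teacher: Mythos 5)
Your proposal is correct and follows essentially the same route as the paper: the key identity $\phi_m(1)=\sum_{k=0}^{m-1}\theta_{Z^k\31_m,\,Z^k\31_m}$ is exactly the paper's equation \eqref{eq:compactidentity}, and the isomorphism is obtained, as in the paper, from the universal property of $\Qq_\NN$ together with surjectivity and simplicity. You merely spell out two steps the paper leaves implicit (the Cuntz--Nica--Pimsner covariance giving $i^{(m)}(\phi_m(1))=1$, and the nonvanishing of $\NO(X)$ via its copy of $A$), which is a welcome clarification but not a different argument.
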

\begin{proof}
Relation \eqref{it:ToeB4} follows from the fact that for each $m$ we have
\begin{equation}\label{eq:compactidentity}
\phi_m(1)=\sum_{k=0}^{m-1}\theta_{Z^k\31_m, Z^k\31_m}.
\end{equation}
Thus, the generators of $\NO(X)$ satisfy relations (Q1)--(Q3). Whence, by the
universality of $\Qq_\NN$, there exists a $*$-homomorphism from $\Qq_\NN$ to
$\NO(X)$ sending $u$ to $u$ and $s_n$ to $w_n$. This map is clearly surjective and its
injectivity follows from simplicity of $\Qq_\NN$.
\end{proof}

\end{document}